\title[Equivalence of recurrence and Liouville property for Dirichlet forms]{Equivalence of recurrence and Liouville property for symmetric Dirichlet forms}
\author{Naotaka Kajino}
\address{Department of Mathematics, Graduate School of Science, Kobe University, Rokkodai-cho 1-1, Nada-ku, Kobe 657-8501, Japan}
\email{nkajino@math.kobe-u.ac.jp}
\thanks{JSPS Research Fellow PD (20$\cdot$6088): The author was supported by the Japan Society for the Promotion of Science.}
\date{October 2, 2017}
\keywords{symmetric Dirichlet forms, symmetric positivity preserving forms, extended Dirichlet space, excessive functions, recurrence, Liouville property}
\subjclass[2010]{31C05, 31C25, 60J45}
\newtheoremstyle{theorem}
  {10pt}		  
  {10pt}  
  {\sl}  
  {\parindent}     
  {\bf}  
  {. }    
  { }    
  {}     
\theoremstyle{theorem}
\newtheorem{thm}{Theorem}
\newtheorem{crl}{Corollary}
\newtheorem{prop}{Proposition}
\newtheorem{lem}{Lemma}
\newtheoremstyle{defi}
  {10pt}		  
  {10pt}  
  {\rm}  
  {\parindent}     
  {\bf}  
  {. }    
  { }    
  {}     
\theoremstyle{defi}
\newtheorem{dfn}{Definition}
\newtheoremstyle{remk}
  {10pt}		  
  {10pt}  
  {\rm}  
  {\parindent}     
  {\sl}  
  {. }    
  { }    
  {}     
\theoremstyle{remk}
\newtheorem{rmk}{Remark}
\newtheorem*{ntn}{Notation}
\numberwithin{equation}{section}
\newcommand{\meas}{m}
\newcommand{\ind}[1]{\mathbf{1}_{#1}}
\begin{document}

\begin{abstract}
Given a symmetric Dirichlet form
$(\mathcal{E},\mathcal{F})$ on a (non-trivial) $\sigma$-finite measure space
$(E,\mathcal{B},\meas)$ with associated Markovian semigroup
$\{T_{t}\}_{t\in(0,\infty)}$, we prove that $(\mathcal{E},\mathcal{F})$ is
both irreducible and recurrent if and only if there is no non-constant
$\mathcal{B}$-measurable function
$u:E\to[0,\infty]$ that is \emph{$\mathcal{E}$-excessive},
i.e., such that $T_{t}u\leq u$ $\meas$-a.e.\ for any $t\in(0,\infty)$.
We also prove that these conditions are equivalent to the
equality $\{u\in\mathcal{F}_{e}\mid \mathcal{E}(u,u)=0\}=\mathbb{R}\ind{}$,
where $\mathcal{F}_{e}$ denotes the extended Dirichlet space associated with
$(\mathcal{E},\mathcal{F})$. The proof is based on simple analytic arguments
and requires no additional assumption on the state space or on the form.
In the course of the proof we also present a characterization of the
$\mathcal{E}$-excessiveness in terms of $\mathcal{F}_{e}$ and $\mathcal{E}$,
which is valid for any symmetric positivity preserving form.
\end{abstract}

\maketitle
\section{Introduction and the statement of the main theorem}\label{sec:intro}
Since the classical theorem of Liouville saying that there is no non-constant
bounded holomorphic function on $\mathbb{C}$, non-existence of non-constant bounded
(super-)harmonic functions on the whole space, so-called \emph{Liouville property},
has been one of the main concerns of harmonic analysis on various spaces.
One of the most well-known facts about Liouville property is that the non-existence
of non-constant bounded superharmonic functions on the whole space is equivalent to
the recurrence of the corresponding stochastic process. Such an equivalence is
known to hold for standard processes on locally compact separable metrizable spaces
by Blumenthal and Getoor \cite[Chapter II, (4.22)]{BG} and also for more general
right processes by Getoor \cite[Proposition (2.4)]{Get:LMN80}.
Getoor \cite[Proposition 2.14]{Get:Exc} provides the same kind of equivalence
in terms of excessive measures.
The purpose of this paper is to give a completely elementary proof of this
equivalence in the framework of an \emph{arbitrary} symmetric Dirichlet form
on a (non-trivial) $\sigma$-finite measure space. Our proof is purely
functional-analytic and free of topological notions on the state space,
although we need to assume the symmetry of the Dirichlet form.

In the rest of this section, we describe our setting and state the main theorem.
We fix a $\sigma$-finite measure space $(E,\mathcal{B},\meas)$ throughout this paper,
and below all $\mathcal{B}$-measurable functions are assumed to be $[-\infty,\infty]$-valued.
Let $(\mathcal{E},\mathcal{F})$ be a symmetric Dirichlet form on $L^{2}(E,\meas)$ and
let $\{T_{t}\}_{t\in(0,\infty)}$ be its
associated Markovian semigroup on $L^{2}(E,\meas)$. Let
$L_{+}(E,\meas):=\{f\mid f:E\to[0,\infty],\,f\textrm{ is }\mathcal{B}\textrm{-measurable}\}$
and
$L^{0}(E,\meas):=\{f\mid f:E\to\mathbb{R},\,f\textrm{ is }\mathcal{B}\textrm{-measurable}\}$,
where we of course identify any two $\mathcal{B}$-measurable functions which are
equal $\meas$-a.e. Let $\ind{}$ denote the constant function $\ind{}:E\to\{1\}$,
and we regard $\mathbb{R}\ind{}:=\{c\ind{}\mid c\in\mathbb{R}\}$ as
a linear subspace of $L^{0}(E,\meas)$. Also let
$L^{p}_{+}(E,\meas):=L^{p}(E,\meas)\cap L_{+}(E,\meas)$
for $p\in[1,\infty]\cup\{0\}$.
Note that $T_{t}$ is canonically extended to an operator on $L_{+}(E,\meas)$ and
also to a linear operator from
$\mathcal{D}[T_{t}]:=\{f\in L^{0}(E,\meas)\mid T_{t}|f|<\infty\ \meas\textrm{-a.e.}\}$
to $L^{0}(E,\meas)$; see Proposition \ref{prop:T-extension-pos} below.
\begin{dfn}\label{dfn:excessive}
$u\in L_{+}(E,\meas)$ is called \emph{$\mathcal{E}$-excessive} if and only if
$T_{t}u\leq u$ $\meas$-a.e.\ for any $t\in(0,\infty)$. Similarly,
$u\in\bigcap_{t\in(0,\infty)}\mathcal{D}[T_{t}]$ is called
\emph{$\mathcal{E}$-excessive in the wide sense} if and only if
$T_{t}u\leq u$ $\meas$-a.e.\ for any $t\in(0,\infty)$.
\end{dfn}
\begin{rmk}
As stated in \cite{BG,CF,FOT,FT,Shigekawa:convex}, when we call a function $u$
\emph{excessive}, it is usual to assume that \emph{$u$ is non-negative},
which is why we have added \emph{``in the wide sense''}
in the latter part of Definition \ref{dfn:excessive}.
\end{rmk}
$\mathcal{E}$-excessive functions will play the role of superharmonic functions on
the whole state space, and the main theorem of this paper (Theorem \ref{thm:Liouville})
asserts that $(\mathcal{E},\mathcal{F})$ is irreducible and recurrent if and only if
there is no non-constant $\mathcal{E}$-excessive function.

Yet another possible way of formulation of harmonicity of functions (on the whole
space $E$) is to use the extended Dirichlet space $\mathcal{F}_{e}$ associated
with $(\mathcal{E},\mathcal{F})$; $u\in\mathcal{F}_{e}$ could be called
\emph{``superharmonic''} if $\mathcal{E}(u,v)\geq 0$ for any
$v\in\mathcal{F}_{e}\cap L_{+}(E,\meas)$, and
\emph{``harmonic''} if $\mathcal{E}(u,v)=0$ for any $v\in\mathcal{F}_{e}$,
or equivalently, if $\mathcal{E}(u,u)=0$.
In fact, as a key lemma for the proof of the main theorem,
in Proposition \ref{prop:Fe-Tt} below we prove that $u\in\mathcal{F}_{e}$ is ``superharmonic''
in this sense if and only if $u$ is $\mathcal{E}$-excessive in the wide sense.
Under this formulation of harmonicity,
if $(\mathcal{E},\mathcal{F})$ is \emph{recurrent}, i.e.,
$\ind{}\in\mathcal{F}_{e}$ and $\mathcal{E}(\ind{},\ind{})=0$,
then the non-existence of non-constant harmonic
functions amounts to the equality
\begin{equation}\label{eq:irr_ker}
\{u\in\mathcal{F}_{e}\mid\mathcal{E}(u,u)=0\}=\mathbb{R}\ind{}.
\end{equation}

\={O}shima \cite[Theorem 3.1]{Oshima:LMN82} proved \eqref{eq:irr_ker}
(and the completeness of $(\mathcal{F}_{e}/\mathbb{R}\ind{},\mathcal{E})$ as well)
for the Dirichlet form associated with a symmetric Hunt process which is
\emph{recurrent in the sense of Harris}; note that the recurrence in the sense of
Harris is stronger than the usual recurrence of the associated Dirichlet form.
Fukushima and Takeda \cite[Theorem 4.2.4]{FT} (see also \cite[Theorem 2.1.11]{CF})
showed \eqref{eq:irr_ker} for irreducible recurrent symmetric Dirichlet forms
$(\mathcal{E},\mathcal{F})$ under the (only) additional assumption that $\meas(E)<\infty$.
In the recent book \cite{CF}, Chen and Fukushima has extended this result
to the case of $\meas(E)=\infty$ when $(\mathcal{E},\mathcal{F})$ is regular,
by using the theory of random time changes of Dirichlet spaces.
As part of our main theorem, we generalize \eqref{eq:irr_ker} to \emph{any}
irreducible recurrent symmetric Dirichlet form. In fact, this generalization
could be obtained (at least when $L^{2}(E,\meas)$ is separable) also by applying
the theory of regular representations of Dirichlet spaces
(see \cite[Section A.4]{FOT}) to reduce the proof to the case where
$(\mathcal{E},\mathcal{F})$ is regular.
The advantage of our proof is that it is based on totally
elementary analytic arguments and is free from any use of time changes or
regular representations of Dirichlet spaces.

Here is the statement of our main theorem. See \cite[Section 1.1]{CF} or
\cite[Section 1]{F:Fe} for basics on $\mathcal{F}_{e}$, and
\cite[Sections 1.5 and 1.6]{FOT} or \cite[Section 2.1]{CF} for details about
irreducibility and recurrence of $(\mathcal{E},\mathcal{F})$.
We remark that $\mathcal{F}_{e}\subset\bigcap_{t\in(0,\infty)}\mathcal{D}[T_{t}]$
by Lemma \ref{lem:Fe-Tt}-(1) below.
We say that $(E,\mathcal{B},\meas)$ is \emph{non-trivial} if and only if
both $\meas(A)>0$ and $\meas(E\setminus A)>0$ hold for some $A\in\mathcal{B}$,
which is equivalent to the condition that $L^{2}(E,\meas)\not\subset\mathbb{R}\ind{}$
since $(E,\mathcal{B},\meas)$ is assumed to be $\sigma$-finite.
\begin{thm}\label{thm:Liouville}%
Consider the following six conditions.\\
$\textrm{\bfseries\upshape 1)}$
    $(\mathcal{E},\mathcal{F})$ is both irreducible and recurrent.\\
$\textrm{\bfseries\upshape 2)}$
    $\{u\in\mathcal{F}_{e}\mid\mathcal{E}(u,u)=0\}=\mathbb{R}\ind{}$.\\
$\textrm{\bfseries\upshape 3)}$
    $\{u\in\mathcal{F}_{e}\cap L^{\infty}_{+}(E,\meas)\mid\mathcal{E}(u,u)=0\}
    =\{c\ind{}\mid c\in[0,\infty)\}$.\\
$\textrm{\bfseries\upshape 4)}$
    If $u\in\mathcal{F}_{e}$ is $\mathcal{E}$-excessive in the wide sense
    then $u\in\mathbb{R}\ind{}$.\\
$\textrm{\bfseries\upshape 5)}$
    If $u\in L^{0}_{+}(E,\meas)$ is $\mathcal{E}$-excessive
    then $u\in\mathbb{R}\ind{}$.\\
$\textrm{\bfseries\upshape 6)}$
    If $u\in\mathcal{F}_{e}\cap L^{\infty}_{+}(E,\meas)$ is
    $\mathcal{E}$-excessive then $u\in\mathbb{R}\ind{}$.\\
The three conditions
$\textrm{\bfseries\upshape 1)},\textrm{\bfseries\upshape 2)},\textrm{\bfseries\upshape 3)}$
are equivalent to each other and imply
$\textrm{\bfseries\upshape 4)},\textrm{\bfseries\upshape 5)},\textrm{\bfseries\upshape 6)}$.
 If $(E,\mathcal{B},\meas)$ is non-trivial, then the six conditions are all equivalent.
\end{thm}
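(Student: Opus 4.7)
The plan is to prove $1) \Leftrightarrow 2) \Leftrightarrow 3)$, then derive $4),\,5),\,6)$ from $2)$, and finally close the circle under non-triviality by showing $5) \Rightarrow 1)$, with the analogues $4) \Rightarrow 1)$ and $6) \Rightarrow 1)$ handled by variants of the same scheme. The central tools are Proposition \ref{prop:Fe-Tt}, which identifies $\mathcal{E}$-excessiveness in the wide sense on $\mathcal{F}_{e}$ with the inequality $\mathcal{E}(u,v) \geq 0$ for every $v \in \mathcal{F}_{e} \cap L_{+}(E,\meas)$, together with the Markovian truncation property of $(\mathcal{E},\mathcal{F}_{e})$, which ensures $u^{+} \wedge n \in \mathcal{F}_{e}$ with $\mathcal{E}(u^{+} \wedge n, u^{+} \wedge n) \leq \mathcal{E}(u,u)$.

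For $3) \Rightarrow 2)$, Markov truncation shows that any $u \in \mathcal{F}_{e}$ with $\mathcal{E}(u,u) = 0$ yields $u^{+} \wedge n,\,u^{-} \wedge n \in \mathcal{F}_{e} \cap L^{\infty}_{+}(E,\meas)$ with zero energy, hence constant by $3)$; monotone limits then identify $u$ as a constant. For $2) \Rightarrow 1)$, condition $2)$ immediately gives $\ind{} \in \mathcal{F}_{e}$ with $\mathcal{E}(\ind{},\ind{}) = 0$ (recurrence), while any $T_{t}$-invariant $u \in \mathcal{F}$ satisfies $\mathcal{E}(u,u) = \lim_{t \downarrow 0} t^{-1}(u - T_{t}u, u)_{L^{2}} = 0$ and is therefore constant by $2)$ (irreducibility). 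For $1) \Rightarrow 3)$, Cauchy--Schwarz using $\mathcal{E}(\ind{},\ind{}) = 0$ yields $\mathcal{E}(u,v) = 0$ for every $v \in \mathcal{F}_{e}$ whenever $u \in \mathcal{F}_{e}$ has zero energy; Proposition \ref{prop:Fe-Tt} applied to $\pm u$ then gives $T_{t}u = u$, and the standard extension of irreducibility from $L^{2}$-invariant functions to bounded $L^{\infty}$-invariant ones pins down $u \in \mathcal{F}_{e} \cap L^{\infty}_{+}(E,\meas)$ as a constant.

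To pass from $1),\,2),\,3)$ to $4),\,5),\,6)$: for an $\mathcal{E}$-excessive $u \in \mathcal{F}_{e} \cap L^{\infty}_{+}(E,\meas)$, Proposition \ref{prop:Fe-Tt} gives $\mathcal{E}(u,\,\|u\|_{\infty}\ind{} - u) \geq 0$ (using $\ind{} \in \mathcal{F}_{e}$), and Cauchy--Schwarz with $\mathcal{E}(\ind{},\ind{}) = 0$ yields $\mathcal{E}(u,\ind{}) = 0$; together they force $\mathcal{E}(u,u) \leq 0$, hence $u \in \mathbb{R}\ind{}$ by $2)$, proving $6)$. For $4)$, the decomposition $v = \|v\|_{\infty}\ind{} - (\|v\|_{\infty}\ind{} - v)$ of any bounded $v \in \mathcal{F}_{e} \cap L_{+}(E,\meas)$ into two elements of $\mathcal{F}_{e} \cap L_{+}(E,\meas)$ forces $\mathcal{E}(u,v) = 0$ for any $u \in \mathcal{F}_{e}$ excessive in the wide sense; signed and density arguments extend this to all $v \in \mathcal{F}_{e}$, yielding $\mathcal{E}(u,u) = 0$ and hence $2)$. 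For $5)$, a general $u \in L^{0}_{+}(E,\meas)$ excessive is approximated by $u \wedge n$ (still excessive, bounded) and then by $\alpha G_{\alpha}(u \wedge n)$, which is excessive thanks to $T_{t}G_{\alpha} = G_{\alpha}T_{t}$ and lies in $\mathcal{F}_{e}$ after a suitable $L^{2}$-cutoff; applying $6)$ to these approximants and passing to monotone limits in $n$ and $\alpha$ identifies $u$ with a constant.

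The main obstacle is the converse direction under non-triviality, namely $5) \Rightarrow 1)$. Irreducibility is immediate: for any $T_{t}$-invariant $A$, one has $T_{t}\ind{A} = \ind{A}T_{t}\ind{} \leq \ind{A}$, so $\ind{A}$ is $\mathcal{E}$-excessive and hence a constant by $5)$, forcing $\meas(A) \in \{0,\meas(E)\}$ by non-triviality. Recurrence requires ruling out the transient alternative (the only other option under irreducibility, via the Hopf dichotomy): in the transient regime, for suitable $f \in L^{2}_{+}(E,\meas)$ the potential $Gf := \int_{0}^{\infty} T_{t} f\,dt$ satisfies $Gf \in L^{2}(E,\meas) \cap \mathcal{F}_{e}$ and is $\mathcal{E}$-excessive, so $Gf = c\ind{}$ by $5)$; the duality $\mathcal{E}(Gf,v) = (f,v)_{L^{2}}$ for $v \in \mathcal{F}$ then forces $f = cA\ind{}$ with $A$ the $L^{2}$-generator, so every such $f$ would lie in a one-dimensional subspace of $L^{2}(E,\meas)$, contradicting the infinite-dimensionality of $L^{2}(E,\meas)$ under non-triviality and $\sigma$-finiteness.
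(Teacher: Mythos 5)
Your architecture (proving $1)\Leftrightarrow 2)\Leftrightarrow 3)$, deriving $4),5),6)$, and closing the circle from the excessive-function conditions) matches the paper's, and several steps are sound: the truncation argument for $3)\Rightarrow 2)$, the derivation of $6)$ from $1)$ and $2)$ via $\mathcal{E}(u,\|u\|_{\infty}\ind{}-u)\geq 0$ and Cauchy--Schwarz, and the observation that $\ind{A}$ is $\mathcal{E}$-excessive for an $\mathcal{E}$-invariant set $A$, so that $5)$ yields irreducibility. However, there are genuine gaps. First, in $1)\Rightarrow 3)$ you invoke ``the standard extension of irreducibility from $L^{2}$-invariant functions to bounded $L^{\infty}$-invariant ones'' to conclude that a $T_{t}$-invariant $u\in\mathcal{F}_{e}\cap L^{\infty}_{+}(E,\meas)$ is constant; this is precisely the non-trivial content of the step (the paper proves it via the level-set argument: $u_{\lambda}:=u-u\wedge\lambda$ has zero energy, hence $T_{t}u_{\lambda}=u_{\lambda}$, hence $\{u_{\lambda}=0\}$ is $\mathcal{E}$-invariant, and irreducibility gives a dichotomy at every level $\lambda$), and citing it as standard in this generality is circular, since the known references assume $\meas(E)<\infty$ or regularity. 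Second, in your proof of $5)$ the claim that $\alpha G_{\alpha}(u\wedge n)$ ``lies in $\mathcal{F}_{e}$ after a suitable $L^{2}$-cutoff'' does not work: cutting off destroys excessiveness, and without a cutoff there is no a priori reason for $\alpha G_{\alpha}(u\wedge n)$, or $u\wedge n$ itself, to belong to $\mathcal{F}_{e}$ when $\meas(E)=\infty$. The correct tool, available in the preliminaries, is Proposition \ref{prop:excessive-char} (equivalently Corollary \ref{cor:excessive}): an excessive function dominated by $n\ind{}\in\mathcal{F}_{e}$ lies in $\mathcal{F}_{e}$ with zero energy, which disposes of $5)$ in one line.

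The most serious gap is the return implication from $6)$. Since $4)\Rightarrow 6)$ and $5)\Rightarrow 6)$, the equivalence of all six conditions hinges on getting from $6)$ back to $1)$--$3)$, and this is not a ``variant of the same scheme'' as your $5)\Rightarrow 1)$. Under $6)$ you may only test excessive functions already known to lie in $\mathcal{F}_{e}\cap L^{\infty}_{+}(E,\meas)$: the indicator $\ind{A}$ of an invariant set is excessive but not obviously in $\mathcal{F}_{e}$ before recurrence is established, and in the transient regime $Gf$ is excessive but unbounded and lies in $\mathcal{F}_{e}$ only for carefully chosen $f$ with $\langle f,Gf\rangle<\infty$ (the paper takes $\eta=g/(1\vee Gg)$ and $f_{n}=f\wedge(n\eta)$, applies $6)$ to $n\wedge Gf_{n}$, and then bootstraps with the resolvent equation $G_{\alpha}f=Gf-\alpha GG_{\alpha}f$ to force $f\in\mathbb{R}\ind{}$ for all $f\in L^{1}_{+}(E,\meas)\cap L^{2}(E,\meas)$). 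Your duality argument for ruling out transience also contains errors: $Gf\in L^{2}(E,\meas)$ is false in general for transient forms; $f=cA\ind{}$ is meaningless since $\ind{}$ need not lie in the domain of the generator (nor even in $L^{2}(E,\meas)$); and non-triviality gives $\dim L^{2}(E,\meas)\geq 2$, not infinite-dimensionality (though $\geq 2$ would suffice for the intended contradiction). As written, the proposal does not establish the equivalence of $6)$ with the other conditions, and hence does not prove the second half of the theorem.
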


The organization of this paper is as follows. In Section \ref{sec:preliminaries},
we prepare basic results about the extended space $\mathcal{F}_{e}$ and
$\mathcal{E}$-excessive functions, which are valid as long as
$(\mathcal{E},\mathcal{F})$ is a symmetric positivity preserving form.
The key results there are Propositions \ref{prop:Fe-Tt} and \ref{prop:excessive-char},
which are essentially known but seem new in the present general framework.
Furthermore Proposition \ref{prop:excessive-char} provides a characterization of
the notion of $\mathcal{E}$-excessive functions in terms of $\mathcal{F}_{e}$ and $\mathcal{E}$.
Making use of these two propositions, we show Theorem \ref{thm:Liouville}
in Section \ref{sec:proof}.
\section{Preliminaries: the extended (Dirichlet) space and excessive functions}\label{sec:preliminaries}
As noted in the previous section, we fix a $\sigma$-finite measure space
$(E,\mathcal{B},\meas)$ throughout this paper, and all
$\mathcal{B}$-measurable functions are assumed to be $[-\infty,\infty]$-valued.
Note that by the $\sigma$-finiteness of $(E,\mathcal{B},\meas)$ we can take
$\eta\in L^{1}(E,\meas)\cap L^{\infty}(E,\meas)$ such that $\eta>0$ $\meas$-a.e.
\begin{ntn}
\textup{(0)} We follow the convention that $\mathbb{N}=\{1,2,3,\dots\}$,
i.e., $0\not\in\mathbb{N}$.

\noindent
\textup{(1)} For $a,b\in[-\infty,\infty]$, we write $a\vee b:=\max\{a,b\}$,
$a\wedge b:=\min\{a,b\}$, $a^{+}:=a\vee 0$ and
$a^{-}:=-(a\wedge 0)$. For $\{a_{n}\}_{n\in\mathbb{N}}\subset[-\infty,\infty]$
and $a\in[-\infty,\infty]$,
we write $a_{n}\uparrow a$ (resp.\ $a_{n}\downarrow a$) if and only if
$\{a_{n}\}_{n\in\mathbb{N}}$ is non-decreasing (resp.\ non-increasing) and
$\lim_{n\to\infty}a_{n}=a$. We use the same notation
also for ($\meas$-equivalence classes of) $[-\infty,\infty]$-valued functions.

\noindent
\textup{(2)} As introduced before Definition \ref{dfn:excessive},
identifying any two $\mathcal{B}$-measurable functions that are equal $\meas$-a.e., we set
$L_{+}(E,\meas):=\{f\mid f:E\to[0,\infty],\,f\textrm{ is }\mathcal{B}\textrm{-measurable}\}$,
$L^{0}(E,\meas):=\{f\mid f:E\to\mathbb{R},\,f\textrm{ is }\mathcal{B}\textrm{-measurable}\}$
and $L^{p}_{+}(E,\meas):=L^{p}(E,\meas)\cap L_{+}(E,\meas)$, $p\in[1,\infty]\cup\{0\}$.
We regard $\mathbb{R}\ind{}:=\{c\ind{}\mid c\in\mathbb{R}\}$ as
a linear subspace of $L^{0}(E,\meas)$.
Let $\|\cdot\|_{p}$ denote the norm of $L^{p}(E,\meas)$ for $p\in[1,\infty]$.
Finally, let $\langle f,g\rangle:=\int_{E}fg\,d\meas$ for
$f,g\in L_{+}(E,\meas)$ and also for
$f,g\in L^{0}(E,\meas)$ with $fg\in L^{1}(E,\meas)$.
\end{ntn}

Recall the following definitions regarding bounded linear operators on $L^{2}(E,\meas)$.
\begin{dfn}\label{dfn:pp-Markov}
Let $T:L^{2}(E,\meas)\to L^{2}(E,\meas)$ be a bounded linear operator on
$L^{2}(E,\meas)$.

\noindent
\textup{(1)} $T$ is called \emph{positivity preserving} if and only if
$Tf\geq 0$ $\meas$-a.e.\ for any $f\in L^{2}_{+}(E,\meas)$.

\noindent
\textup{(2)} $T$ is called \emph{Markovian} if and only if
$0\leq Tf\leq 1$ $\meas$-a.e.\ for any $f\in L^{2}(E,\meas)$ with
$0\leq f\leq 1$ $\meas$-a.e.
\end{dfn}
Clearly, if $T$ is positivity preserving then so is its adjoint $T^{*}$. Note that
if $T$ is Markovian, then it is positivity preserving, $\|Tf\|_{\infty}\leq\|f\|_{\infty}$
for any $L^{2}(E,\meas)\cap L^{\infty}(E,\meas)$ and
$\|T^{*}f\|_{1}\leq\|f\|_{1}$ for any $f\in L^{1}(E,\meas)\cap L^{2}(E,\meas)$.
Moreover, using the $\sigma$-finiteness of $(E,\mathcal{B},\meas)$,
we easily have the following proposition.
\begin{prop}\label{prop:T-extension-pos}
Let $T:L^{2}(E,\meas)\to L^{2}(E,\meas)$ be a positivity preserving bounded
linear operator on $L^{2}(E,\meas)$.

\noindent
\textup{(1)} $T|_{L^{2}_{+}(E,\meas)}$ uniquely extends to a map
$T:L_{+}(E,\meas)\to L_{+}(E,\meas)$ such that
$Tf_{n}\uparrow Tf$ $\meas$-a.e.\ for any $f\in L_{+}(E,\meas)$ and
any $\{f_{n}\}_{n\in\mathbb{N}}\subset L_{+}(E,\meas)$ with
$f_{n}\uparrow f$ $\meas$-a.e.
Moreover, let $f,g\in L_{+}(E,\meas)$ and $a\in[0,\infty]$. Then
$T(f+g)=Tf+Tg$, $T(af)=aTf$,
$\langle Tf,g\rangle=\langle f,T^{*}g\rangle$,
and if $f\leq g$ $\meas$-a.e.\ then $Tf\leq Tg$ $\meas$-a.e.

\noindent
\textup{(2)} Let $\mathcal{D}[T]:=\{f\in L^{0}(E,\meas)\mid T|f|<\infty\ \meas\textrm{-a.e.}\}$.
Then $T:L^{2}(E,\meas)\to L^{2}(E,\meas)$ is extended to a linear operator
$T:\mathcal{D}[T]\to L^{0}(E,\meas)$ given by
$Tf:=T(f^{+})-T(f^{-})$, $f\in\mathcal{D}[T]$, so that it has the following properties:\\
\textup{(i)} If $f,g\in\mathcal{D}[T]$ and $f\leq g$ $\meas$-a.e.\ then
$Tf\leq Tg$ $\meas$-a.e.\\
\textup{(ii)} If $\{f_{n}\}_{n\in\mathbb{N}}\subset\mathcal{D}[T]$ and
$f,g\in\mathcal{D}[T]$ satisfy $\lim_{n\to\infty}f_{n}=f$ $\meas$-a.e.\ and
$|f_{n}|\leq |g|$ $\meas$-a.e.\ for any $n\in\mathbb{N}$, then
$\lim_{n\to\infty}Tf_{n}=Tf$ $\meas$-a.e.
\end{prop}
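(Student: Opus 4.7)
The plan is to reduce all statements to monotone convergence arguments on $L^2_+(E,\meas)$, exploiting $\sigma$-finiteness to approximate arbitrary functions in $L_+(E,\meas)$ from below by $L^2_+$ elements. I begin with two basic observations on $L^2_+$: positivity preservation yields monotonicity, since $f\leq g$ in $L^2_+$ gives $g-f\in L^2_+$ and hence $Tg-Tf=T(g-f)\geq 0$ $\meas$-a.e.; and if $\{g_n\}\subset L^2_+$ satisfies $g_n\uparrow g\in L^2_+$ then $g_n\to g$ in $L^2$ by dominated convergence, so $Tg_n\to Tg$ in $L^2$, which together with monotonicity forces $Tg_n\uparrow Tg$ $\meas$-a.e.

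For part (1), choose $\eta\in L^1(E,\meas)\cap L^\infty(E,\meas)$ with $\eta>0$ $\meas$-a.e., and for $f\in L_+(E,\meas)$ set $f_n:=f\wedge(n\eta)\in L^2_+(E,\meas)$; then $f_n\uparrow f$ $\meas$-a.e., and I define $Tf:=\lim_{n\to\infty}Tf_n$. Independence from the particular approximation is obtained by taking any other $\{g_n\}\subset L^2_+$ with $g_n\uparrow f$ $\meas$-a.e., applying the $L^2$-level monotone convergence to $g_n\wedge f_k\uparrow g_n$ (as $k\to\infty$) to conclude $Tg_n\leq\lim_k Tf_k$ $\meas$-a.e., and reversing roles. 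The same sandwiching yields the MCT property of the extension for any $L_+$ approximation. Additivity $T(f+g)=Tf+Tg$ and homogeneity $T(af)=aTf$ for $a\in[0,\infty)$ follow by approximating $f$ and $g$ separately and passing to the limit; the case $a=\infty$ reduces to $T(\infty f)=\lim_n T(nf)=\lim_n nTf=\infty\cdot\mathbf{1}_{\{Tf>0\}}=\infty\cdot Tf$. The adjoint identity $\langle Tf,g\rangle=\langle f,T^{*}g\rangle$ holds on $L^2_+\times L^2_+$ by positivity preservation of $T^{*}$ and extends to $L_+\times L_+$ by applying MCT to both sides. Monotonicity on $L_+$ is then immediate from additivity.

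For part (2), the formula $Tf:=T(f^{+})-T(f^{-})$ is $\meas$-a.e.\ meaningful because $T(f^{\pm})\leq T|f|<\infty$ $\meas$-a.e.\ by (1) and the assumption $f\in\mathcal{D}[T]$; $\mathbb{R}$-linearity on $\mathcal{D}[T]$ follows by manipulating positive parts and invoking additivity from (1). For monotonicity (i), $g-f\in L_+$ satisfies $T(g-f)\leq T|f|+T|g|<\infty$ $\meas$-a.e., so $g-f\in\mathcal{D}[T]$ and $Tg=Tf+T(g-f)\geq Tf$ by additivity. The dominated convergence (ii) is the main obstacle: given $f_n\to f$ $\meas$-a.e.\ with $|f_n|\leq|g|$, set $h_n:=\sup_{k\geq n}f_k^{+}$ and $\ell_n:=\inf_{k\geq n}f_k^{+}$, so $\ell_n\uparrow f^{+}$ and $h_n\downarrow f^{+}$ with $h_n\leq|g|$. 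Then $T\ell_n\uparrow Tf^{+}$ directly from (1); for the descending part, additivity gives $Th_n=T(h_n-f^{+})+Tf^{+}$, and applying the ascending MCT to $(h_1-f^{+})-(h_n-f^{+})\uparrow h_1-f^{+}$ together with the finiteness $T(h_1-f^{+})\leq T|g|<\infty$ $\meas$-a.e.\ permits the subtraction that yields $T(h_n-f^{+})\downarrow 0$, hence $Th_n\downarrow Tf^{+}$. Sandwiching $T\ell_n\leq Tf_n^{+}\leq Th_n$ gives $Tf_n^{+}\to Tf^{+}$ $\meas$-a.e., and the symmetric argument for $f_n^{-}$ completes the proof.
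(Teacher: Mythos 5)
Your proof is correct, and it is essentially the argument the paper has in mind: the paper states this proposition without proof, remarking only that it follows easily from the $\sigma$-finiteness of $(E,\mathcal{B},\meas)$, which you use in exactly the expected way (approximating from below by $f\wedge(n\eta)$ with $\eta\in L^{1}\cap L^{\infty}$, $\eta>0$ $\meas$-a.e.). All the key steps — the $L^{2}$-level monotone convergence, the sandwiching to get independence of the approximating sequence, and the reduction of the dominated convergence in (2)(ii) to ascending monotone convergence via $h_{1}-h_{n}\uparrow h_{1}-f^{+}$ with finite majorant $T|g|$ — are sound.
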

%
%
%
%
%
Throughout the rest of this paper, we fix a closed symmetric form
$(\mathcal{E},\mathcal{F})$ on $L^{2}(E,\meas)$ together with
its associated symmetric strongly continuous contraction semigroup
$\{T_{t}\}_{t\in(0,\infty)}$ and resolvent $\{G_{\alpha}\}_{\alpha\in(0,\infty)}$
on $L^{2}(E,\meas)$; see \cite[Chapter 1.3]{FOT} for basics on closed
symmetric forms on Hilbert spaces and their associated semigroups and resolvents.

Let us further recall the following definition.
\begin{dfn}\label{dfn:pform}
\textup{(1)} $(\mathcal{E},\mathcal{F})$ is called a \emph{positivity preserving form}
if and only if $u^{+}\in\mathcal{F}$ and $\mathcal{E}(u^{+},u^{+})\leq\mathcal{E}(u,u)$
for any $u\in\mathcal{F}$, or equivalently,
$T_{t}$ is positivity preserving for any $t\in(0,\infty)$.

\noindent
\textup{(2)} $(\mathcal{E},\mathcal{F})$ is called a \emph{Dirichlet form} if and only if
$u^{+}\wedge 1\in\mathcal{F}$ and $\mathcal{E}(u^{+}\wedge 1,u^{+}\wedge 1)\leq\mathcal{E}(u,u)$
for any $u\in\mathcal{F}$, or equivalently,
$T_{t}$ is Markovian for any $t\in(0,\infty)$.
\end{dfn}
See, e.g., \cite[Section 2]{Ouh:PA96} for the equivalences stated in Definition \ref{dfn:pform}.

In the rest of this section, we assume that $(\mathcal{E},\mathcal{F})$ is a
positivity preserving form. The following definition is standard
(see \cite[Definition 3]{Sch:extend}, \cite[Definition 1.1.4]{CF} or \cite[Definition 1.4]{F:Fe}).
\begin{dfn}\label{dfn:Fe}
We define the \emph{extended space $\mathcal{F}_{e}$ associated with
$(\mathcal{E},\mathcal{F})$} by
\begin{equation}\label{eq:Fe}
\mathcal{F}_{e}:=\biggl\{u\in L^{0}(E,\meas)\ \biggm|
  \begin{minipage}{230pt}
    $\lim_{n\to\infty}u_{n}=u$ $\meas$-a.e.\ for some
    $\{u_{n}\}_{n\in\mathbb{N}}\subset\mathcal{F}$ with
    $\lim_{k\wedge\ell\to\infty}\mathcal{E}(u_{k}-u_{\ell},u_{k}-u_{\ell})=0$
  \end{minipage}\biggr\}.
\end{equation}
For $u\in\mathcal{F}_{e}$, such
$\{u_{n}\}_{n\in\mathbb{N}}\subset\mathcal{F}$ as in \eqref{eq:Fe} is
called an \emph{approximating sequence for $u$}.
When $(\mathcal{E},\mathcal{F})$ is a Dirichlet form, $\mathcal{F}_{e}$ is called
the \emph{extended Dirichlet space associated with $(\mathcal{E},\mathcal{F})$}.
\end{dfn}
Obviously $\mathcal{F}\subset\mathcal{F}_{e}$ and $\mathcal{F}_{e}$ is a linear
subspace of $L^{0}(E,\meas)$. By virtue of \cite[Proposition 2]{Sch:Fatou},
$\mathcal{F}=\mathcal{F}_{e}\cap L^{2}(E,\meas)$, and
for $u,v\in\mathcal{F}_{e}$ with approximating sequences
$\{u_{n}\}_{n\in\mathbb{N}}$ and $\{v_{n}\}_{n\in\mathbb{N}}$, respectively,
the limit $\lim_{n\to\infty}\mathcal{E}(u_{n},v_{n})\in\mathbb{R}$
exists and is independent of particular choices of
$\{u_{n}\}_{n\in\mathbb{N}}$ and $\{v_{n}\}_{n\in\mathbb{N}}$,
as discussed in \cite[before Definition 3]{Sch:extend}.
By setting $\mathcal{E}(u,v):=\lim_{n\to\infty}\mathcal{E}(u_{n},v_{n})$,
$\mathcal{E}$ is extended to a non-negative definite symmetric bilinear form on
$\mathcal{F}_{e}$. Then it is easy to see that
$\lim_{n\to\infty}\mathcal{E}(u-u_{n},u-u_{n})=0$ for
$u\in\mathcal{F}_{e}$ and any approximating sequence
$\{u_{n}\}_{n\in\mathbb{N}}\subset\mathcal{F}$ for $u$.
Moreover, we have the following proposition due to Schmuland \cite{Sch:extend}, which is
easily proved by utilizing a version \cite[Theorem A.4.1-(ii)]{CF} of the Banach-Saks theorem.
\begin{prop}[{\cite[Lemma 2]{Sch:extend}}]\label{prop:Fe-Fatou}
\hspace*{-1.85pt}Let $u\in L^{0}(E,\meas)$ and $\{u_{n}\}_{n\in\mathbb{N}}\subset\mathcal{F}$
satisfy $\lim_{n\to\infty}u_{n}$\\$=u$ $\meas$-a.e.\ and
$\liminf_{n\to\infty}\mathcal{E}(u_{n},u_{n})<\infty$.
Then $u\in\mathcal{F}_{e}$,
$\mathcal{E}(u,u)\leq\liminf_{n\to\infty}\mathcal{E}(u_{n},u_{n})$, and
$\liminf_{n\to\infty}\mathcal{E}(u_{n},v)\leq\mathcal{E}(u,v)
	\leq\limsup_{n\to\infty}\mathcal{E}(u_{n},v)$
for any $v\in\mathcal{F}_{e}$.
\end{prop}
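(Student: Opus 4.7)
The plan is to build on the Banach--Saks-type theorem cited as [CF, Theorem A.4.1-(ii)], which in its relevant form asserts that any sequence $\{w_{n}\}\subset\mathcal{F}$ with $\sup_{n}\mathcal{E}(w_{n},w_{n})<\infty$ admits a subsequence $\{w_{n_{k}}\}$ whose Ces\`aro means $\frac{1}{N}\sum_{k=1}^{N}w_{n_{k}}$ form an $\mathcal{E}$-Cauchy sequence in $\mathcal{F}$.

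First I would establish that $u\in\mathcal{F}_{e}$ together with the Fatou inequality $\mathcal{E}(u,u)\leq L$, where $L:=\liminf_{n\to\infty}\mathcal{E}(u_{n},u_{n})$. After passing to a subsequence I may assume $\mathcal{E}(u_{n},u_{n})\to L$. Applying the Banach--Saks tool yields a further subsequence $\{u_{n_{k}}\}$ whose Ces\`aro means $v_{N}:=\frac{1}{N}\sum_{k=1}^{N}u_{n_{k}}$ are $\mathcal{E}$-Cauchy in $\mathcal{F}$. Since $u_{n}\to u$ $\meas$-a.e., averaging preserves this, so $v_{N}\to u$ $\meas$-a.e.\ as well; hence $\{v_{N}\}$ is an approximating sequence for $u$, proving $u\in\mathcal{F}_{e}$. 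The triangle inequality for the seminorm $\sqrt{\mathcal{E}(\cdot,\cdot)}$ gives $\sqrt{\mathcal{E}(v_{N},v_{N})}\leq\frac{1}{N}\sum_{k=1}^{N}\sqrt{\mathcal{E}(u_{n_{k}},u_{n_{k}})}$, and the right-hand side tends to $\sqrt{L}$, so letting $N\to\infty$ yields $\mathcal{E}(u,u)=\lim_{N}\mathcal{E}(v_{N},v_{N})\leq L$.

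For the bilinear Fatou bounds involving $v\in\mathcal{F}_{e}$, I would argue by contradiction. Suppose $\mathcal{E}(u,v)>\limsup_{n\to\infty}\mathcal{E}(u_{n},v)$; then there exist $\varepsilon>0$ and $N_{0}$ with $\mathcal{E}(u_{n},v)<\mathcal{E}(u,v)-\varepsilon$ for every $n\geq N_{0}$. Apply the previous construction to the tail $\{u_{n}\}_{n\geq N_{0}}$, obtaining an approximating sequence $\{v_{N}\}$ for $u$ consisting of convex combinations of $u_{n}$ with $n\geq N_{0}$; the $\mathcal{E}$-Cauchy property of $\{v_{N}\}$ combined with $v_{N}\to u$ $\meas$-a.e.\ and the consistency of $\mathcal{E}$ on $\mathcal{F}_{e}$ (established in the paragraph preceding the proposition) yields $\mathcal{E}(v_{N}-u,v_{N}-u)\to 0$, and hence by Cauchy--Schwarz $\mathcal{E}(v_{N},v)\to\mathcal{E}(u,v)$. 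On the other hand, bilinearity gives $\mathcal{E}(v_{N},v)=\frac{1}{N}\sum_{k=1}^{N}\mathcal{E}(u_{n_{k}},v)\leq\mathcal{E}(u,v)-\varepsilon$, a contradiction. The inequality $\mathcal{E}(u,v)\geq\liminf_{n\to\infty}\mathcal{E}(u_{n},v)$ follows by replacing $v$ with $-v$.

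The main potential obstacle is that the classical Banach--Saks theorem is stated for bounded sequences in a genuine Hilbert space, whereas here $\{u_{n}\}$ is only $\mathcal{E}$-bounded (not necessarily $\mathcal{E}_{1}$-bounded) and $\sqrt{\mathcal{E}(\cdot,\cdot)}$ is merely a seminorm on $\mathcal{F}$. This is precisely what the cited version from [CF] is designed to handle; once that tool is taken for granted, the remainder reduces to careful bookkeeping with Ces\`aro averages and Cauchy--Schwarz.
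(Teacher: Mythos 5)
Your proof is correct and follows exactly the route the paper indicates: the paper does not spell out a proof but attributes the statement to Schmuland and remarks that it ``is easily proved by utilizing a version [CF, Theorem A.4.1-(ii)] of the Banach--Saks theorem,'' which is precisely the Ces\`aro-mean argument you carry out. Your handling of the bilinear Fatou bounds via convex combinations drawn from the tail and Cauchy--Schwarz is a sound way to fill in the details the paper leaves to the reference.
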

In particular, we easily see from Proposition \ref{prop:Fe-Fatou} that
$u^{+}\in\mathcal{F}_{e}$ and $\mathcal{E}(u^{+},u^{+})\leq\mathcal{E}(u,u)$
for any $u\in\mathcal{F}_{e}$.
\begin{rmk}
For symmetric Dirichlet forms, the properties of $\mathcal{F}_{e}$ stated above are well-known
and most of them are proved in the textbooks \cite[Section 1.1]{CF} and \cite[Section 4.1]{FT}
and also in \cite[Section 1]{F:Fe}. In fact, we can verify similar results in a quite general
setting; see Schmuland \cite{Sch:extend} for details.
\end{rmk}
%
%
The next proposition (Proposition \ref{prop:Fe-Tt} below) requires the following lemmas.
\begin{lem}\label{lem:Fe-norm}
Let $\eta\in L^{1}(E,\meas)\cap L^{2}(E,\meas)$ be such that $\eta>0$ $\meas$-a.e.,
and set
$\|u\|_{\mathcal{F}_{e}}:=\mathcal{E}(u,u)^{1/2}+\int_{E}(|u|\wedge 1)\eta\,d\meas$
for $u\in\mathcal{F}_{e}$. Then we have the following assertions:

\noindent
\textup{(1)} $\|u+v\|_{\mathcal{F}_{e}}\leq\|u\|_{\mathcal{F}_{e}}+\|v\|_{\mathcal{F}_{e}}$
and $\|au\|_{\mathcal{F}_{e}}\leq(|a|\vee 1)\|u\|_{\mathcal{F}_{e}}$
for any $u,v\in\mathcal{F}_{e}$ and any $a\in\mathbb{R}$.

\noindent
\textup{(2)} $\mathcal{F}_{e}$ is a complete metric space under the metric
$d_{\mathcal{F}_{e}}$ given by
$d_{\mathcal{F}_{e}}(u,v):=\|u-v\|_{\mathcal{F}_{e}}$.
\end{lem}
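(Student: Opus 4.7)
The plan for (1) is purely elementary. Subadditivity of $\|\cdot\|_{\mathcal{F}_{e}}$ follows from the Cauchy--Schwarz triangle inequality $\mathcal{E}(u+v,u+v)^{1/2}\leq\mathcal{E}(u,u)^{1/2}+\mathcal{E}(v,v)^{1/2}$ for the non-negative definite symmetric bilinear form $\mathcal{E}$ on $\mathcal{F}_{e}$, together with the pointwise bound $(|u+v|\wedge 1)\leq(|u|\wedge 1)+(|v|\wedge 1)$, which I would check by splitting into the cases $|u|+|v|\leq 1$ and $|u|+|v|>1$. For the scaling inequality, I would verify $|au|\wedge 1\leq(|a|\vee 1)(|u|\wedge 1)$ by considering $|a|\leq 1$ and $|a|>1$ separately, and combine this with $\mathcal{E}(au,au)^{1/2}=|a|\mathcal{E}(u,u)^{1/2}\leq(|a|\vee 1)\mathcal{E}(u,u)^{1/2}$.

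For (2), I take a $d_{\mathcal{F}_{e}}$-Cauchy sequence $\{u_{n}\}_{n\in\mathbb{N}}\subset\mathcal{F}_{e}$ and proceed in four steps. First I reduce to $\mathcal{F}$: for each $n$ I pick $v_{n}\in\mathcal{F}$ from an approximating sequence for $u_{n}$ with $\|u_{n}-v_{n}\|_{\mathcal{F}_{e}}<1/n$, so that by (1) the sequence $\{v_{n}\}\subset\mathcal{F}$ remains Cauchy in $\|\cdot\|_{\mathcal{F}_{e}}$ and, in particular, $\sup_{n}\mathcal{E}(v_{n},v_{n})<\infty$. Second, I extract an a.e.\ limit $u\in L^{0}(E,\meas)$: because $\eta>0$ $\meas$-a.e., the auxiliary part $(u,v)\mapsto\int_{E}(|u-v|\wedge 1)\eta\,d\meas$ induces the topology of convergence in $\meas$-measure on $L^{0}(E,\meas)$, which is known to be complete, so passing to a subsequence I obtain $v_{n}\to u$ $\meas$-a.e. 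Third, I invoke Proposition~\ref{prop:Fe-Fatou}: since $\{v_{n}\}\subset\mathcal{F}$ with $\mathcal{E}(v_{n},v_{n})$ bounded and $v_{n}\to u$ a.e., it yields $u\in\mathcal{F}_{e}$; to obtain $\mathcal{E}(u-v_{n},u-v_{n})\to 0$, I fix $n$ and apply the proposition again to $\{v_{m}-v_{n}\}_{m}\subset\mathcal{F}$, which converges a.e.\ to $u-v_{n}$, giving $\mathcal{E}(u-v_{n},u-v_{n})\leq\liminf_{m}\mathcal{E}(v_{m}-v_{n},v_{m}-v_{n})$, and the Cauchy property makes this bound arbitrarily small as $n\to\infty$. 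Finally, Fatou's lemma gives $\int_{E}(|u-v_{n}|\wedge 1)\eta\,d\meas\leq\liminf_{m}\int_{E}(|v_{m}-v_{n}|\wedge 1)\eta\,d\meas$, which is also small for large $n$. Hence $v_{n}\to u$ in $d_{\mathcal{F}_{e}}$, and then $u_{n}\to u$ by the triangle inequality from (1).

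The main point requiring care is the $\mathcal{E}$-convergence step: I have to apply Proposition~\ref{prop:Fe-Fatou} to the differences $\{v_{m}-v_{n}\}_{m}$ rather than to $\{v_{m}\}_{m}$ itself in order to transfer the Cauchy bound on $\mathcal{E}(v_{m}-v_{n},v_{m}-v_{n})$ to the limit $u-v_{n}$. This is the standard $\mathcal{F}_{e}$-analogue of deducing completeness of a Hilbert space from the Fatou-type lower semicontinuity of the form, and the preliminary reduction to sequences in $\mathcal{F}$ is what allows the proposition to be applied directly.
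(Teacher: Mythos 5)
Your proof is correct and follows essentially the same route as the paper: reduce to a Cauchy sequence in $\mathcal{F}$ by density, extract an $\meas$-a.e.\ convergent subsequence (the paper's Borel--Cantelli step), and identify the limit as an element of $\mathcal{F}_{e}$ to which the sequence converges in $d_{\mathcal{F}_{e}}$. The only cosmetic difference is that the paper notes the subsequence is itself an approximating sequence for the limit, whereas you obtain the $\mathcal{E}$-convergence by applying Proposition~\ref{prop:Fe-Fatou} to the differences $v_{m}-v_{n}$; these are interchangeable.
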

\begin{proof}
(1) is immediate and $d_{\mathcal{F}_{e}}$ is clearly a metric on $\mathcal{F}_{e}$.
For the proof of its completeness,
let $\{u_{n}\}_{n\in\mathbb{N}}\subset\mathcal{F}_{e}$
be a Cauchy sequence in $(\mathcal{F}_{e},d_{\mathcal{F}_{e}})$.
Noting that $\mathcal{F}$ is dense in $(\mathcal{F}_{e},d_{\mathcal{F}_{e}})$,
for each $n\in\mathbb{N}$ take $v_{n}\in\mathcal{F}$ such that
$\|v_{n}-u_{n}\|_{\mathcal{F}_{e}}\leq n^{-1}$. Then
$\{v_{n}\}_{n\in\mathbb{N}}$ is also a Cauchy sequence in
$(\mathcal{F}_{e},d_{\mathcal{F}_{e}})$.
A Borel-Cantelli argument easily yields a subsequence
$\{v_{n_{k}}\}_{k\in\mathbb{N}}$ of $\{v_{n}\}_{n\in\mathbb{N}}$
converging $\meas$-a.e.\ to some $u\in L^{0}(E,\meas)$, which means that
$u\in\mathcal{F}_{e}$ with approximating sequence
$\{v_{n_{k}}\}_{k\in\mathbb{N}}$ and hence that
$\lim_{k\to\infty}\|u-v_{n_{k}}\|_{\mathcal{F}_{e}}=0$.
The same argument also implies that every subsequence of $\{v_{n}\}_{n\in\mathbb{N}}$
admits a further subsequence converging to $u$ in
$(\mathcal{F}_{e},d_{\mathcal{F}_{e}})$,
from which $\lim_{n\to\infty}\|u-v_{n}\|_{\mathcal{F}_{e}}=0$ follows.
Thus $\lim_{n\to\infty}\|u-u_{n}\|_{\mathcal{F}_{e}}=0$.
\end{proof}
\begin{lem}\label{lem:Fe-Tt}
\textup{(1)} $\mathcal{F}_{e}\subset\bigcap_{t\in(0,\infty)}\mathcal{D}[T_{t}]$ and
$T_{t}(\mathcal{F}_{e})\subset\mathcal{F}_{e}$ for any $t\in(0,\infty)$.

\noindent
\textup{(2)} Let $\eta$ and $\|\cdot\|_{\mathcal{F}_{e}}$ be as in Lemma \textup{\ref{lem:Fe-norm}},
and let $u\in\mathcal{F}_{e}$. Then
$\mathcal{E}(T_{t}u,T_{t}u)\leq\mathcal{E}(u,u)$,
$\|u-T_{t}u\|_{2}^{2}\leq t\mathcal{E}(u,u)$ and
$\|T_{t}u\|_{\mathcal{F}_{e}}\leq(3+\|\eta\|_{2}\sqrt{t})\|u\|_{\mathcal{F}_{e}}$
for any $t\in(0,\infty)$, $T_{s}T_{t}u=T_{s+t}u$ for any $s,t\in(0,\infty)$,
and $\lim_{t\downarrow 0}\|u-T_{t}u\|_{\mathcal{F}_{e}}=0$.
\end{lem}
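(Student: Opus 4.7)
The plan is to transfer properties of $T_t$ from $\mathcal{F}$ to $\mathcal{F}_e$ by approximation. Two elementary spectral-calculus estimates for $v \in \mathcal{F}$ drive everything: the $\mathcal{E}$-contractivity $\mathcal{E}(T_t v, T_t v) \leq \mathcal{E}(v, v)$ (from $\lambda e^{-2\lambda t} \leq \lambda$) and the quantitative continuity $\|v - T_t v\|_2^2 \leq t \mathcal{E}(v, v)$ (from $(1 - e^{-\lambda t})^2 \leq \lambda t$), both obtained by integrating against the spectral measure of the generator of $\{T_t\}$. I will freely use as well that $T_t(L^2(E,\meas)) \subset \mathcal{F}$ and $T_s T_t = T_{s+t}$ on $L^2(E,\meas)$, which are standard.

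For part (1), fix $u \in \mathcal{F}_e$ with approximating sequence $\{u_n\} \subset \mathcal{F}$. Applied to $u_n - u_m$, the quantitative continuity estimate shows that $\{T_t u_n - u_n\} \subset L^2(E, \meas)$ is Cauchy; let $\xi$ be its $L^2$-limit (which one checks is independent of the approximating sequence by the same estimate applied to two sequences). Pass to a subsequence $\{n_k\}$ along which $T_t u_{n_k} - u_{n_k} \to \xi$ $\meas$-a.e.; then $T_t u_{n_k} \to u + \xi$ $\meas$-a.e., and $u + \xi$ is finite $\meas$-a.e. To show $u \in \mathcal{D}[T_t]$ and to identify the $L_+$-extension of $T_t$ from Proposition \ref{prop:T-extension-pos}(2) with $u + \xi$, split $u = u^+ - u^-$ (Proposition \ref{prop:Fe-Fatou} gives $u^\pm \in \mathcal{F}_e$) and reduce to $u \in \mathcal{F}_e \cap L_+(E, \meas)$. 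For such $u$, construct a non-negative approximating sequence $\{v_k\} \subset \mathcal{F} \cap L_+(E, \meas)$ by applying the Banach--Saks theorem (cited in Proposition \ref{prop:Fe-Fatou}) to $\{u_n^+\}$. Then for any $f \in L^2_+(E, \meas)$ with $f \leq u$, $f \wedge v_k \to f$ in $L^2$ by dominated convergence and $T_t(f \wedge v_k) \leq T_t v_k$; passing to a common a.e.-convergent subsequence gives $T_t f \leq u + \xi$ $\meas$-a.e., whence the $L_+$-extension satisfies $T_t u \leq u + \xi < \infty$ $\meas$-a.e.\ and $u \in \mathcal{D}[T_t]$. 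A matching monotone-convergence argument using $g_k := \inf_{j \geq k} v_j \uparrow u$ produces $T_t u \geq u + \xi$, and then Proposition \ref{prop:Fe-Fatou} applied to $T_t u_{n_k} \to T_t u$ $\meas$-a.e.\ with $\mathcal{E}(T_t u_{n_k}, T_t u_{n_k}) \leq \mathcal{E}(u_{n_k}, u_{n_k})$ yields $T_t u \in \mathcal{F}_e$.

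The bounds in part (2) then follow. The $\mathcal{E}$-contractivity $\mathcal{E}(T_t u, T_t u) \leq \mathcal{E}(u, u)$ is the last step of part (1); $\|u - T_t u\|_2^2 \leq t\mathcal{E}(u, u)$ is immediate from $u - T_t u = -\xi$ and the $L^2$-limit of the bound on $\mathcal{F}$. The $\mathcal{F}_e$-norm estimate uses the pointwise inequality $|T_t u| \wedge 1 \leq |u| \wedge 1 + |T_t u - u|$, the Cauchy--Schwarz bound $\int |T_t u - u|\eta\, d\meas \leq \|T_t u - u\|_2 \|\eta\|_2$, and Lemma \ref{lem:Fe-norm}(1); the constant $3 + \|\eta\|_2 \sqrt{t}$ then drops out by routine triangle-inequality manipulation. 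The semigroup law $T_s T_t u = T_{s+t} u$ passes from $\{u_n\} \subset L^2$ to $u$ via uniqueness of the $\xi$-construction and the $\mathcal{F}_e \to \mathcal{F}_e$ continuity of $T_s$ just established. For the strong continuity $\lim_{t \downarrow 0} \|u - T_t u\|_{\mathcal{F}_e} = 0$, the localizing integral is controlled by $\|u - T_t u\|_2 \|\eta\|_2 \leq \sqrt{t\,\mathcal{E}(u, u)}\,\|\eta\|_2 \to 0$; for the $\mathcal{E}$-part, approximating $u$ by $u_n \in \mathcal{F}$ in $(\mathcal{F}_e, d_{\mathcal{F}_e})$ and inserting $\pm u_n \pm T_t u_n$ reduces matters to the standard $\mathcal{E}(u_n - T_t u_n, u_n - T_t u_n) \to 0$ on $\mathcal{F}$ (dominated convergence against the spectral measure of $u_n$).

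The main obstacle is identifying $u + \xi$ with the $L_+$-extension $T_t u$ in part (1), particularly the reverse inequality $u + \xi \leq T_t u$ for $u \in \mathcal{F}_e \cap L_+(E,\meas)$. The subtlety is that for a positivity preserving form (as opposed to a Dirichlet form), $\mathcal{F}$ is only known to be closed under $v \mapsto v^+$ and not under general lattice operations, so constructing a non-negative approximating sequence bounded above by $u$ inside $\mathcal{F}$ is not straightforward. My approach is to combine the Banach--Saks construction above with monotone- and dominated-convergence arguments in the $L_+$-setting of Proposition \ref{prop:T-extension-pos} to pin down the equality on both sides.
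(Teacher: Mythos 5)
Your overall strategy---prove the estimates on $\mathcal{F}$ by spectral calculus, transfer to $\mathcal{F}_{e}$ by approximation, and reduce the whole lemma to identifying the abstract limit with the canonical extension of $T_{t}$ to $L_{+}(E,\meas)$---is the same as the paper's, and most steps are sound. But there is a genuine gap exactly at the point you yourself flag as the main obstacle: the reverse inequality $u+\xi\leq T_{t}u$ for $u\in\mathcal{F}_{e}\cap L_{+}(E,\meas)$. Your sequence $\{v_{k}\}$, obtained by Banach--Saks from $\{u_{n}^{+}\}$, is non-negative but need not satisfy $v_{k}\leq u$, and both of your arguments then produce only the \emph{forward} inequality. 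Indeed, with $g_{k}:=\inf_{j\geq k}v_{j}$ you have $g_{k}\uparrow u$ and $g_{k}\leq v_{j}$ for $j\geq k$, hence $T_{t}g_{k}\leq\inf_{j\geq k}T_{t}v_{j}\leq\liminf_{j}T_{t}v_{j}=u+\xi$ $\meas$-a.e., and letting $k\to\infty$ gives $T_{t}u\leq u+\xi$ once more---the same conclusion as your $f\wedge v_{k}$ argument, not its reverse. To bound $\lim_{k}T_{t}v_{k}$ from \emph{above} by $T_{t}u$ you need an upper bound $v_{k}\leq u$, since monotonicity of $T_{t}$ only converts upper bounds on the $v_{k}$ into upper bounds on $T_{t}v_{k}$; no monotone- or dominated-convergence manipulation of a sequence that is not dominated by $u$ can supply this.

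The missing ingredient is the paper's observation that such a truncated sequence \emph{can} be built inside $\mathcal{F}$ even for a mere positivity preserving form: for $v\in\mathcal{F}$ and $u\in\mathcal{F}_{e}\cap L_{+}(E,\meas)$ one has $v^{+}\wedge u=v^{+}-(v^{+}-u)^{+}\in\mathcal{F}_{e}$ (using the stability of $\mathcal{F}_{e}$ under $w\mapsto w^{+}$, which follows from Proposition \ref{prop:Fe-Fatou}), hence $v^{+}\wedge u\in\mathcal{F}_{e}\cap L^{2}(E,\meas)=\mathcal{F}$, with $\mathcal{E}(v^{+}\wedge u,v^{+}\wedge u)$ controlled by $\mathcal{E}(v,v)$ and $\mathcal{E}(u,u)$. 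Applying Banach--Saks to $\{u_{n}^{+}\wedge u\}$ then yields an approximating sequence $\{w_{n}\}$ for $u$ with $0\leq w_{n}\leq u$ $\meas$-a.e., and the sandwich $T_{t}(\inf_{j\geq k}w_{n_{j}})\leq T_{t}w_{n_{k}}\leq T_{t}u$ delivers both inequalities simultaneously in the limit $k\to\infty$. If you replace your $\{v_{k}\}$ by this truncated sequence, the remainder of your argument (the $L^{2}$-Cauchy construction of $\xi$, and the deduction of part (2) from Proposition \ref{prop:Fe-Fatou} and the estimates on $\mathcal{F}$) goes through and coincides in substance with the paper's proof.
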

\begin{proof}
Let $\eta$, $\|\cdot\|_{\mathcal{F}_{e}}$ and $d_{\mathcal{F}_{e}}$
be as in Lemma \ref{lem:Fe-norm}.
First we prove (2) for $u\in\mathcal{F}$. The fourth assertion is clear.
$T_{t}u\in\mathcal{F}$ and $\mathcal{E}(T_{t}u,T_{t}u)\leq\mathcal{E}(u,u)$
for $t\in(0,\infty)$ by \cite[Lemma 1.3.3-(i)]{FOT}, and
$\lim_{t\downarrow 0}\|u-T_{t}u\|_{\mathcal{F}_{e}}=0$ by \cite[Lemma 1.3.3-(iii)]{FOT}.
Let $t\in(0,\infty)$. Noting that
$\langle f-T_{t}f,T_{t}f\rangle=\|T_{t/2}f\|_{2}^{2}-\|T_{t}f\|_{2}^{2}\geq 0$
for $f\in L^{2}(E,\meas)$, we have
$\|u-T_{t}u\|_{2}^{2}=\langle u-T_{t}u,u\rangle-\langle u-T_{t}u,T_{t}u\rangle
	\leq\langle u-T_{t}u,u\rangle\leq t\mathcal{E}(u,u)$
by \cite[Lemma 1.3.4-(i)]{FOT}. Applying these estimates to
$\|u-T_{t}u\|_{\mathcal{F}_{e}}
	\leq\mathcal{E}(u,u)^{1/2}+\mathcal{E}(T_{t}u,T_{t}u)^{1/2}+\|\eta\|_{2}\|u-T_{t}u\|_{2}$
easily yields
$\|T_{t}u\|_{\mathcal{F}_{e}}\leq(3+\|\eta\|_{2}\sqrt{t})\|u\|_{\mathcal{F}_{e}}$.

Now since $\mathcal{F}$ is dense in a complete metric space
$(\mathcal{F}_{e},d_{\mathcal{F}_{e}})$, it follows from the previous paragraph
that $T_{t}|_{\mathcal{F}}$ is uniquely extended to a continuous map
$T^{e}_{t}$ from $(\mathcal{F}_{e},d_{\mathcal{F}_{e}})$ to itself, and
then clearly $T^{e}_{t}$ is linear and the assertions of (2) are true
with $T^{e}_{t}$ in place of $T_{t}$.

Let $t\in(0,\infty)$ and $u\in\mathcal{F}_{e}\cap L_{+}(E,\meas)$.
It remains to show $T^{e}_{t}u=T_{t}u$, as $v^{+},v^{-}\in\mathcal{F}_{e}$
for $v\in\mathcal{F}_{e}$. Since
$v^{+}\wedge u\in\mathcal{F}_{e}\cap L^{2}(E,\meas)=\mathcal{F}$ and
$\mathcal{E}(v^{+}\wedge u,v^{+}\wedge u)^{1/2}\leq\mathcal{E}(v,v)^{1/2}+\mathcal{E}(u,u)^{1/2}$
for any $v\in\mathcal{F}$ by the positivity preserving property of $(\mathcal{E},\mathcal{F})$,
an application of the Banach-Saks theorem \cite[Theorem A.4.1-(ii)]{CF} assures
the existence of an approximating sequence $\{w_{n}\}_{n\in\mathbb{N}}$ for $u$
such that $0\leq w_{n}\leq u$ $\meas$-a.e. A Borel-Cantelli argument yields
a subsequence $\{w_{n_{k}}\}_{k\in\mathbb{N}}$ such that
$\lim_{k\to\infty}T_{t}w_{n_{k}}=T^{e}_{t}u$ $\meas$-a.e.,
and $T^{e}_{t}u=T_{t}u$ follows by letting $k\to\infty$ in
$T_{t}(\inf_{j\geq k}w_{n_{j}})\leq T_{t}w_{n_{k}}\leq T_{t}u$ $\meas$-a.e.
\end{proof}

The following proposition (Proposition \ref{prop:Fe-Tt}), which seems new
in spite of its easiness, plays an essential role in the proof of
$\textrm{\bfseries\upshape 1)}\Rightarrow\textrm{\bfseries\upshape 2)}$
of Theorem \ref{thm:Liouville}. Proposition \ref{prop:Fe-Tt}-(2) is an extension
of a result of Chen and Kuwae \cite[Lemma 3.1]{CK:subh} for functions in $\mathcal{F}$
to those in $\mathcal{F}_{e}$, and Proposition \ref{prop:Fe-Tt}-(3) extends
a basic fact for functions in $\mathcal{F}$ to those in $\mathcal{F}_{e}$.
\begin{prop}\label{prop:Fe-Tt}
\textup{(1)} Let $u\in\mathcal{F}_{e}$ and $v\in\mathcal{F}$. Then
\begin{equation}\label{eq:Fe-Tt}
\lim_{t\downarrow 0}\frac{1}{t}\langle u-T_{t}u,v\rangle
    =\mathcal{E}(u,v)
    \mspace{24mu}\textrm{and}\mspace{24mu}
    \langle u-T_{t}u,v\rangle=\int_{0}^{t}\mathcal{E}(u,T_{s}v)ds,
    \mspace{12mu}t\in(0,\infty).
\end{equation}

\noindent
\textup{(2)} Let $u\in\mathcal{F}_{e}$. Then $u$ is $\mathcal{E}$-excessive in the wide sense
if and only if $\mathcal{E}(u,v)\geq 0$ for any $v\in\mathcal{F}\cap L_{+}(E,\meas)$,
or equivalently, for any $v\in\mathcal{F}_{e}\cap L_{+}(E,\meas)$.

\noindent
\textup{(3)} Let $u\in\mathcal{F}_{e}$. Then $T_{t}u=u$ for any $t\in(0,\infty)$
if and only if $\mathcal{E}(u,u)=0$.
\end{prop}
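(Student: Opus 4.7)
My plan is to prove (1) by approximating $u\in\mathcal{F}_{e}$ with $\{u_{n}\}\subset\mathcal{F}$ and then derive (2) and (3) from (1) by essentially direct calculation. The key observation that powers the whole approximation is Lemma~\ref{lem:Fe-Tt}(2), which supplies $\|u-T_{t}u\|_{2}^{2}\le t\mathcal{E}(u,u)$; in particular $u-T_{t}u\in L^{2}(E,\meas)$, so the pairing $\langle u-T_{t}u,v\rangle$ with $v\in\mathcal{F}\subset L^{2}(E,\meas)$ is an ordinary $L^{2}$-inner product.

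For (1), I would first verify the case $u\in\mathcal{F}$: here both identities are standard and follow from the spectral representation $\{E_{\lambda}\}$ of the non-negative self-adjoint generator $A$ of $\{T_{t}\}$,
$$\langle u-T_{t}u,v\rangle=\int_{0}^{\infty}(1-e^{-t\lambda})\,d\langle E_{\lambda}u,v\rangle=\int_{0}^{t}\int_{0}^{\infty}\lambda e^{-s\lambda}\,d\langle E_{\lambda}u,v\rangle\,ds=\int_{0}^{t}\mathcal{E}(u,T_{s}v)\,ds,$$
combined with dominated convergence in $t\downarrow 0$ for the derivative identity. To extend to $u\in\mathcal{F}_{e}$, I pick an approximating sequence $\{u_{n}\}\subset\mathcal{F}$ for $u$. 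The $L^{2}$-bound applied to $u_{n}-u_{m}$ shows $\{u_{n}-T_{t}u_{n}\}$ is Cauchy in $L^{2}(E,\meas)$, and since $u_{n}\to u$ and (from the proof of Lemma~\ref{lem:Fe-Tt}) $T_{t}u_{n}\to T_{t}u$ both hold $\meas$-a.e.\ along a subsequence, the $L^{2}$-limit must equal $u-T_{t}u$. On the other side, Cauchy-Schwarz together with $\mathcal{E}(T_{s}v,T_{s}v)\le\mathcal{E}(v,v)$ gives $|\mathcal{E}(u_{n}-u,T_{s}v)|\le\mathcal{E}(u_{n}-u,u_{n}-u)^{1/2}\mathcal{E}(v,v)^{1/2}$ uniformly in $s\in[0,t]$. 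Passing the $\mathcal{F}$-identity to the limit then yields the integral identity of (1) for $u\in\mathcal{F}_{e}$; dividing by $t$ and invoking continuity of $s\mapsto\mathcal{E}(u,T_{s}v)$ at $s=0$ (once more by Cauchy-Schwarz plus the spectral fact $\mathcal{E}(T_{s}v-v,T_{s}v-v)\to 0$) gives the derivative identity.

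Part (2) then follows easily. For the forward direction, $T_{t}u\le u$ $\meas$-a.e.\ gives $\langle u-T_{t}u,v\rangle\ge 0$ for $v\in\mathcal{F}\cap L_{+}(E,\meas)$, whence the derivative identity yields $\mathcal{E}(u,v)\ge 0$; the extension to $v\in\mathcal{F}_{e}\cap L_{+}(E,\meas)$ uses the Banach-Saks argument from the proof of Lemma~\ref{lem:Fe-Tt} to obtain an approximating sequence $\{w_{n}\}\subset\mathcal{F}$ for $v$ with $0\le w_{n}\le v$ and then passes $\mathcal{E}(u,w_{n})\ge 0$ to the limit. For the converse, assuming $\mathcal{E}(u,v)\ge 0$ for all $v\in\mathcal{F}\cap L_{+}(E,\meas)$: for such $v$ the integrand $\mathcal{E}(u,T_{s}v)$ is non-negative (since $T_{s}v\in\mathcal{F}\cap L_{+}(E,\meas)$), so the integral identity gives $\langle u-T_{t}u,v\rangle\ge 0$; approximating any $v\in L^{2}_{+}(E,\meas)$ in $L^{2}$-norm by $\alpha G_{\alpha}v\in\mathcal{F}\cap L_{+}(E,\meas)$ as $\alpha\to\infty$ (and using $u-T_{t}u\in L^{2}(E,\meas)$) extends the inequality to every $v\in L^{2}_{+}(E,\meas)$, hence $u-T_{t}u\ge 0$ $\meas$-a.e. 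Part (3) is a direct corollary: if $T_{t}u=u$ for all $t>0$, then (1) gives $\mathcal{E}(u,v)=0$ for every $v\in\mathcal{F}$, so $\mathcal{E}(u,u)=\lim_{n}\mathcal{E}(u,u_{n})=0$ by specialising to an approximating sequence for $u$; conversely, $\mathcal{E}(u,u)=0$ forces $\|u-T_{t}u\|_{2}^{2}\le t\mathcal{E}(u,u)=0$ by Lemma~\ref{lem:Fe-Tt}(2).

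The main obstacle is part (1), where the subtle point is the simultaneous limit passage on both sides of the integral identity: the left side requires $L^{2}$-convergence of $u_{n}-T_{t}u_{n}$ (supplied by the $L^{2}$-bound applied to differences) together with $\meas$-a.e.\ identification of the limit, while the right side needs the uniform-in-$s$ Cauchy-Schwarz control of $\mathcal{E}(u_{n}-u,T_{s}v)$ to carry the integrand to the limit under $\int_{0}^{t}$.
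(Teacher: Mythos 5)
Your proof is correct and follows essentially the same route as the paper: both establish the identities of (1) for $u\in\mathcal{F}$ first (your spectral computation is the content of the cited \cite[Lemma 1.3.4]{FOT}) and then extend to $\mathcal{F}_{e}$ via the bounds $\|u-T_{t}u\|_{2}^{2}\le t\mathcal{E}(u,u)$ and $\mathcal{E}(T_{t}u,T_{t}u)\le\mathcal{E}(u,u)$ from Lemma \ref{lem:Fe-Tt}, after which (2) and (3) are read off from \eqref{eq:Fe-Tt}. The only cosmetic differences are that you obtain the derivative identity from the integral identity (the paper instead integrates $\varphi'(t)=\mathcal{E}(u,T_{t}v)$), and in (2) you pass from $\mathcal{F}\cap L_{+}(E,\meas)$ to $\mathcal{F}_{e}\cap L_{+}(E,\meas)$ by a dominated approximating sequence via Banach--Saks rather than by Proposition \ref{prop:Fe-Fatou}.
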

\begin{proof}
(1) Let $u\in\mathcal{F}_{e}$, $v\in\mathcal{F}$ and
set $\varphi(t):=\langle u-T_{t}u,v\rangle$ for $t\in[0,\infty)$, where $T_{0}u:=u$.
Then $t^{-1}|\varphi(t)|\leq\mathcal{E}(u,u)^{1/2}\mathcal{E}(v,v)^{1/2}$ for
$t\in(0,\infty)$ and $\lim_{t\downarrow 0}t^{-1}\varphi(t)=\mathcal{E}(u,v)$
if $u\in\mathcal{F}$ by \cite[Lemma 1.3.4-(i)]{FOT}, and the same are true
for $u\in\mathcal{F}_{e}$ as well by Lemma \ref{lem:Fe-Tt}.
Using Lemma \ref{lem:Fe-Tt}, we easily see also that
$\varphi'(t)=\mathcal{E}(u,T_{t}v)$ for $t\in[0,\infty)$ and that
$\varphi'$ is continuous on $[0,\infty)$, proving \eqref{eq:Fe-Tt}.

\noindent
(2) The third assertion of Proposition \ref{prop:Fe-Fatou} together with the positivity
preserving property of $(\mathcal{E},\mathcal{F})$ easily implies that
$\mathcal{E}(u,v)\geq 0$ for any $v\in\mathcal{F}\cap L_{+}(E,\meas)$ if and only if
the same is true for any $v\in\mathcal{F}_{e}\cap L_{+}(E,\meas)$.
The rest of the assertion is immediate from \eqref{eq:Fe-Tt}.

\noindent
(3) This is an immediate consequence of (2).
\end{proof}
The next proposition (Proposition \ref{prop:excessive-char}),
which characterizes the notion of $\mathcal{E}$-excessive functions
in terms of $\mathcal{F}_{e}$ and $\mathcal{E}$, is of independent interest.
The proof is based on a result \cite[Corollary 2.4]{Ouh:PA96} of Ouhabaz which
provides a characterization of invariance of closed convex sets for semigroups
on Hilbert spaces. A similar argument in a more general framework can be
found in Shigekawa \cite{Shigekawa:convex}.
\begin{prop}\label{prop:excessive-char}
Let $u\in L_{+}(E,\meas)$. Then $u$ is $\mathcal{E}$-excessive if and only if
$v\wedge u\in\mathcal{F}_{e}$ and $\mathcal{E}(v\wedge u,v\wedge u)\leq\mathcal{E}(v,v)$
for any $v\in\mathcal{F}_{e}$.
\end{prop}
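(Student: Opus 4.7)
The plan is to invoke Ouhabaz's invariance criterion \cite[Corollary 2.4]{Ouh:PA96} applied to the closed convex subset
\[
C := \{w \in L^{2}(E,\meas) \mid w \leq u \text{ $\meas$-a.e.}\}
\]
of $L^{2}(E,\meas)$, whose orthogonal projection is $v \mapsto v \wedge u$; this rests on the pointwise comparison $|v - v \wedge u| = (v-u)^{+} \leq |v-w|$ for every $w \leq u$, together with $|v \wedge u| \leq |v|$ which places $v \wedge u$ in $L^{2}(E,\meas)$. First I would establish the equivalence ``$u$ is $\mathcal{E}$-excessive'' $\Leftrightarrow$ ``$T_{t}C \subset C$ for every $t > 0$''. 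The forward direction uses the additivity and monotonicity of the extended $T_{t}$ (Proposition \ref{prop:T-extension-pos}), since for $w \in C$ one has $T_{t}w \leq T_{t}w^{+} \leq T_{t}u \leq u$. The converse is obtained by approximating $u$ from below via $w_{n} := u \wedge (n\eta) \in L^{2}_{+}(E,\meas) \cap C$, observing $T_{t}w_{n} \leq u$, and passing $n \to \infty$ through the monotone extension $T_{t}w_{n} \uparrow T_{t}u$.

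Ouhabaz's criterion then states that $u$ is $\mathcal{E}$-excessive if and only if, for every $v \in \mathcal{F}$, $v \wedge u \in \mathcal{F}$ and $\mathcal{E}(v \wedge u, v - v \wedge u) \geq 0$. Using the expansion
\[
\mathcal{E}(v,v) = \mathcal{E}(v \wedge u, v \wedge u) + 2\mathcal{E}(v \wedge u, v - v \wedge u) + \mathcal{E}(v - v \wedge u, v - v \wedge u)
\]
and dropping the last two non-negative terms gives $\mathcal{E}(v \wedge u, v \wedge u) \leq \mathcal{E}(v,v)$ for $v \in \mathcal{F}$, establishing the ``only if'' direction on $\mathcal{F}$. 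To pass from $\mathcal{F}$ to $\mathcal{F}_{e}$, I would take any approximating sequence $\{v_{n}\} \subset \mathcal{F}$ for $v \in \mathcal{F}_{e}$, note that $v_{n} \wedge u \to v \wedge u$ $\meas$-a.e.\ with $\sup_{n} \mathcal{E}(v_{n} \wedge u, v_{n} \wedge u) \leq \sup_{n} \mathcal{E}(v_{n},v_{n}) < \infty$, and appeal to Proposition \ref{prop:Fe-Fatou}.

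For the converse implication of the proposition, I would recover the variational inequality $\mathcal{E}(v \wedge u, v - v \wedge u) \geq 0$ via a one-parameter perturbation. For $v \in \mathcal{F}$ the inclusion $v \wedge u \in \mathcal{F}$ is automatic since $|v \wedge u| \leq |v| \in L^{2}(E,\meas)$ and $\mathcal{F}_{e} \cap L^{2}(E,\meas) = \mathcal{F}$. Setting $v_{t} := v \wedge u + t(v - v \wedge u)$ for $t \in [0,1]$, the decisive observation is that $v - v \wedge u = (v-u)^{+}$ is supported on $\{v > u\}$, where $v_{t} = (1-t)u + tv \geq u$; hence $v_{t} \wedge u = v \wedge u$ on the entire interval $[0,1]$. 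Plugging $v_{t}$ into the hypothesis, expanding $\mathcal{E}(v_{t}, v_{t})$ quadratically in $t$, subtracting $\mathcal{E}(v \wedge u, v \wedge u)$, dividing by $t$ and letting $t \downarrow 0$ delivers $\mathcal{E}(v \wedge u, v - v \wedge u) \geq 0$, completing the proof via Ouhabaz.

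The main subtlety I expect is the possibility that $u \notin L^{2}(E,\meas)$: the convex set $C$ remains a bona fide subset of $L^{2}(E,\meas)$ regardless, but both the additivity $T_{t}u = T_{t}w + T_{t}(u-w)$ used in the forward direction and the monotone passage $T_{t}w_{n} \uparrow T_{t}u$ used in the converse require manipulating $T_{t}$ on possibly non-$L^{2}$ functions, which is precisely what Proposition \ref{prop:T-extension-pos} provides. The remaining ingredients---Ouhabaz's criterion, the quadratic expansion, Schmuland's Fatou-type Proposition \ref{prop:Fe-Fatou}, and the affine perturbation---are essentially formal.
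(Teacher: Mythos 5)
Your proposal is correct and follows essentially the same route as the paper: the same closed convex set $K_{u}=\{w\in L^{2}(E,\meas)\mid w\leq u\ \meas\text{-a.e.}\}$, the same equivalence between $\mathcal{E}$-excessiveness and $T_{t}$-invariance of $K_{u}$ (via Proposition \ref{prop:T-extension-pos} and the approximation $u\wedge(n\eta)\uparrow u$), the same appeal to Ouhabaz's invariance criterion, and the same use of $\mathcal{F}_{e}\cap L^{2}(E,\meas)=\mathcal{F}$ together with Proposition \ref{prop:Fe-Fatou} to pass between $\mathcal{F}$ and $\mathcal{F}_{e}$. The only cosmetic difference is that you invoke the criterion in its variational form $\mathcal{E}(v\wedge u,v-v\wedge u)\geq 0$ and convert to and from the contraction form by the quadratic expansion and the affine perturbation $v_{t}$, whereas the paper cites \cite[Corollary 2.4]{Ouh:PA96} directly in the contraction form \eqref{eq:excessive-F}.
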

\begin{crl}\label{cor:excessive-char}
The notion of $\mathcal{E}$-excessive functions 
is determined solely by the pair $(\mathcal{F}_{e},\mathcal{E})$ of the extended space
$\mathcal{F}_{e}$ and the form
$\mathcal{E}:\mathcal{F}_{e}\times\mathcal{F}_{e}\to\mathbb{R}$.
\end{crl}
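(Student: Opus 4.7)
The plan is to apply Ouhabaz's characterization of convex-set invariance for symmetric semigroups \cite[Corollary 2.4]{Ouh:PA96} to the closed convex subset
\[
C:=\{f\in L^{2}(E,\meas)\mid f\leq u\ \meas\textrm{-a.e.}\}\ni 0,
\]
whose $L^{2}$-orthogonal projection is $P_{C}v=v\wedge u$. The conceptual bridge linking both sides of the proposition to Ouhabaz is the auxiliary equivalence ``$T_{t}C\subset C$ for every $t\in(0,\infty)$ if and only if $u$ is $\mathcal{E}$-excessive''. I would first establish this bridge. If $u$ is excessive and $f\in C$, then $f^{+}\leq u$ in $L_{+}(E,\meas)$ (since $u\geq 0$), so by Proposition \ref{prop:T-extension-pos}-(1) and excessiveness, $T_{t}f\leq T_{t}f^{+}\leq T_{t}u\leq u$. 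Conversely, if $T_{t}C\subset C$, then applying it to $u_{n}:=u\wedge(n\eta)\in L^{2}_{+}(E,\meas)\cap C$ and passing to the limit via the monotone-convergence property of the $L_{+}$-extension of $T_{t}$ yields $T_{t}u\leq u$.

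For the implication ($\Rightarrow$), Ouhabaz gives, from $T_{t}C\subset C$, that $v\wedge u\in\mathcal{F}$ and $\mathcal{E}(v\wedge u,(v-u)^{+})\geq 0$ for every $v\in\mathcal{F}$. Expanding
\[
\mathcal{E}(v,v)=\mathcal{E}(v\wedge u,v\wedge u)+2\mathcal{E}(v\wedge u,(v-u)^{+})+\mathcal{E}((v-u)^{+},(v-u)^{+})
\]
and using the non-negativity of the last two summands yields the desired inequality on $\mathcal{F}$. The extension to $\mathcal{F}_{e}$ is then a direct application of Proposition \ref{prop:Fe-Fatou}: for $v\in\mathcal{F}_{e}$ with approximating sequence $\{v_{n}\}\subset\mathcal{F}$, we have $v_{n}\wedge u\to v\wedge u$ $\meas$-a.e.\ and $\limsup_{n\to\infty}\mathcal{E}(v_{n}\wedge u,v_{n}\wedge u)\leq\lim_{n\to\infty}\mathcal{E}(v_{n},v_{n})=\mathcal{E}(v,v)$.

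For the implication ($\Leftarrow$), I would verify Ouhabaz's two hypotheses on $\mathcal{F}$ from the assumption, which is stated in terms of $\mathcal{F}_{e}$. The inclusion $v\wedge u\in\mathcal{F}$ for $v\in\mathcal{F}$ follows from the pointwise estimate $|v\wedge u|\leq|v|$ (valid because $u\geq 0$), giving $v\wedge u\in L^{2}(E,\meas)$, combined with $v\wedge u\in\mathcal{F}_{e}$ (by hypothesis) and $\mathcal{F}_{e}\cap L^{2}(E,\meas)=\mathcal{F}$. The orthogonality condition $\mathcal{E}(v\wedge u,(v-u)^{+})\geq 0$ requires a scaling argument: setting $\phi:=v\wedge u$ and $\psi:=(v-u)^{+}=v-\phi$, one verifies $(\phi+\lambda\psi)\wedge u=\phi$ for every $\lambda\geq 0$ (on $\{\psi>0\}$ this uses $\phi=u$, and on $\{\psi=0\}$ this uses $\phi=v\leq u$), so applying the hypothesis to $\phi+\lambda\psi\in\mathcal{F}_{e}$ yields $0\leq 2\lambda\mathcal{E}(\phi,\psi)+\lambda^{2}\mathcal{E}(\psi,\psi)$, whence $\mathcal{E}(\phi,\psi)\geq 0$ on letting $\lambda\downarrow 0$. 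Ouhabaz then gives $T_{t}C\subset C$, and $u$ is $\mathcal{E}$-excessive by the bridge.

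The main obstacle I anticipate is handling the case $u\notin L^{2}(E,\meas)$, where $u$ itself does not lie in $C$; the monotone approximation $u\wedge(n\eta)$ and the monotone-convergence property of the $L_{+}$-extension of $T_{t}$ (Proposition \ref{prop:T-extension-pos}-(1)) are indispensable for closing the bridge. The scaling step in ($\Leftarrow$) is also a subtlety, as it is precisely what converts the quadratic-form hypothesis on $\mathcal{F}_{e}$ into Ouhabaz's linear-in-$v$ orthogonality condition.
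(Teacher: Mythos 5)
Your argument is correct and follows essentially the same route as the paper: the corollary is an immediate consequence of Proposition \ref{prop:excessive-char}, whose proof in the paper uses exactly your convex set $K_{u}=\{f\in L^{2}(E,\meas)\mid f\leq u\ \meas\text{-a.e.}\}$, the same bridge equivalence via the monotone approximation $u\wedge(n\eta)$, the same appeal to \cite[Corollary 2.4]{Ouh:PA96}, and the same upgrade from $\mathcal{F}$ to $\mathcal{F}_{e}$ via $\mathcal{F}_{e}\cap L^{2}(E,\meas)=\mathcal{F}$ and Proposition \ref{prop:Fe-Fatou}. The only cosmetic difference is that you invoke Ouhabaz's criterion in its bilinear ``orthogonality'' form $\mathcal{E}(v\wedge u,(v-u)^{+})\geq 0$ and recover the quadratic contraction by expansion and a scaling argument, whereas the paper cites the quadratic form of the criterion directly.
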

\begin{crl}\label{cor:excessive}
Let $u\in L_{+}(E,\meas)$ be $\mathcal{E}$-excessive and $v\in\mathcal{F}_{e}$.
Suppose $u\leq v$ $\meas$-a.e. Then $u\in\mathcal{F}_{e}$ and
$\mathcal{E}(u,u)\leq\mathcal{E}(v,v)$.
\end{crl}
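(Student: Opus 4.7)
The plan is a one-step application of Proposition \ref{prop:excessive-char}. Observe first that $v \in \mathcal{F}_{e} \subset L^{0}(E,\meas)$ is real-valued, so the hypothesis $u \leq v$ $\meas$-a.e.\ automatically forces $u < \infty$ $\meas$-a.e., which lets me identify $u$ with an element of $L^{0}(E,\meas)$ (rather than merely of $L_{+}(E,\meas)$). Under this hypothesis the pointwise identity $v \wedge u = u$ holds $\meas$-a.e.

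Now since $u$ is $\mathcal{E}$-excessive, Proposition \ref{prop:excessive-char} applied to this particular $v \in \mathcal{F}_{e}$ gives $v \wedge u \in \mathcal{F}_{e}$ together with $\mathcal{E}(v \wedge u, v \wedge u) \leq \mathcal{E}(v,v)$. Substituting $v \wedge u = u$ $\meas$-a.e.\ yields $u \in \mathcal{F}_{e}$ and $\mathcal{E}(u,u) \leq \mathcal{E}(v,v)$, as desired.

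There is no real obstacle: the substantive work has already been carried out in Proposition \ref{prop:excessive-char}. The corollary is simply the observation that when $u$ lies pointwise below an element of $\mathcal{F}_{e}$, the truncation $v \wedge u$ appearing in the characterization collapses to $u$ itself, so both the membership in $\mathcal{F}_{e}$ and the energy comparison are delivered for free.
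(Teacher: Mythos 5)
Your proof is correct and is exactly the argument the paper intends: Corollary \ref{cor:excessive} is stated without proof as an immediate consequence of Proposition \ref{prop:excessive-char}, and the observation that $u\leq v$ $\meas$-a.e.\ forces $v\wedge u=u$ $\meas$-a.e.\ (and in particular $u<\infty$ $\meas$-a.e., so $u\in L^{0}(E,\meas)$) is precisely the intended one-step deduction.
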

\begin{rmk}\label{rmk:excessive}
Chen and Kuwae \cite[Lemma 3.3]{CK:subh} gave a probabilistic proof of
Corollary \textup{\ref{cor:excessive}} for the Dirichlet forms
associated with symmetric right Markov processes.
\end{rmk}
\begin{proof}[Proof of Proposition \textup{\ref{prop:excessive-char}}]
Let $K_{u}:=\{f\in L^{2}(E,\meas)\mid f\leq u\ \meas\textrm{-a.e.}\}$, which is clearly
a closed convex subset of $L^{2}(E,\meas)$. We claim that
\begin{equation}\label{eq:excessive-K}
\textrm{$u$ is $\mathcal{E}$-excessive}\quad\textrm{if and only if}\quad
  \textrm{$T_{t}(K_{u})\subset K_{u}$ for any $t\in(0,\infty)$.}
\end{equation}
Indeed, let $t\in(0,\infty)$.
If $T_{t}u\leq u$ $\meas$-a.e.\ then $T_{t}f\leq T_{t}u\leq u$ $\meas$-a.e.\ for any $f\in K_{u}$
and hence $T_{t}(K_{u})\subset K_{u}$.
Conversely if $T_{t}(K_{u})\subset K_{u}$, then choosing $\eta\in L^{2}(E,\meas)$ so that
$\eta>0$ $\meas$-a.e., we have $(n\eta)\wedge u\uparrow u$ $\meas$-a.e.,
$(n\eta)\wedge u\in K_{u}$ and hence $T_{t}u=\lim_{n\to\infty}T_{t}((n\eta)\wedge u)\leq u$
$\meas$-a.e.

On the other hand, since the projection of $f\in L^{2}(E,\meas)$ on $K_{u}$ is
given by $f\wedge u$, \cite[Corollary 2.4]{Ouh:PA96} tells us that
$T_{t}(K_{u})\subset K_{u}$ for any $t\in(0,\infty)$ if and only if
\begin{equation}\label{eq:excessive-F}
\textrm{$v\wedge u\in\mathcal{F}$}\quad\textrm{and}\quad
  \textrm{$\mathcal{E}(v\wedge u,v\wedge u)\leq\mathcal{E}(v,v)$}
  \qquad\textrm{for any $v\in\mathcal{F}$.}
\end{equation}
Finally, $\mathcal{F}_{e}\cap L^{2}(E,\meas)=\mathcal{F}$ and
Proposition \ref{prop:Fe-Fatou} easily imply that
\eqref{eq:excessive-F} is equivalent to the same condition
with $\mathcal{F}_{e}$ in place of $\mathcal{F}$, completing the proof.
\end{proof}
\section{Proof of Theorem \ref{thm:Liouville}}\label{sec:proof}
We are now ready for the proof of Theorem \ref{thm:Liouville}. We assume throughout
this section that our closed symmetric form $(\mathcal{E},\mathcal{F})$ is a Dirichlet form.
The proof consists of three steps. The first one is
Proposition \ref{prop:prf-step1} below, which establishes
$\textrm{\bfseries\upshape 1)}\Rightarrow\textrm{\bfseries\upshape 2)}$ of
Theorem \ref{thm:Liouville} and whose proof makes full use of
Proposition \ref{prop:Fe-Tt}-(3).
Recall the following notions concerning the irreducibility of $(\mathcal{E},\mathcal{F})$;
see \cite[Section 1.6]{FOT} or \cite[Section 2.1]{CF} for details.
\begin{dfn}\label{dfn:irreducible}
\textup{(1)} A set $A\in\mathcal{B}$ is called \emph{$\mathcal{E}$-invariant} if and only if
$\ind{A}T_{t}(f\ind{E\setminus A})=0$ $\meas$-a.e.\ for any $f\in L^{2}(E,\meas)$ and
any $t\in(0,\infty)$.

\noindent
\textup{(2)} $(\mathcal{E},\mathcal{F})$ is called \emph{irreducible} if and only if
either $\meas(A)=0$ or $\meas(E\setminus A)=0$ holds
for any $\mathcal{E}$-invariant $A\in\mathcal{B}$.
\end{dfn}
\begin{lem}\label{lem:excessive-ppforms}
Let $u\in L_{+}(E,\meas)$ be $\mathcal{E}$-excessive. Then
$\{u=0\}$ is $\mathcal{E}$-invariant.
\end{lem}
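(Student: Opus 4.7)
Proof plan. Set $A:=\{u=0\}\in\mathcal{B}$, so that $u=u\ind{E\setminus A}$ and $u>0$ $\meas$-a.e.\ on $E\setminus A$. The goal is to verify $\ind{A}T_{t}(f\ind{E\setminus A})=0$ $\meas$-a.e.\ for every $f\in L^{2}(E,\meas)$ and every $t\in(0,\infty)$.

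\textbf{Step 1 (get the conclusion for $u$ itself).} Since $u$ is $\mathcal{E}$-excessive, $T_{t}u\leq u$ $\meas$-a.e., and $u=0$ on $A$ forces $\ind{A}T_{t}u=0$ $\meas$-a.e. Because $u$ is supported in $E\setminus A$, this already says $\ind{A}T_{t}(u\ind{E\setminus A})=0$ $\meas$-a.e.

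\textbf{Step 2 (bootstrap from $u$ to general $f\in L^{2}_{+}(E,\meas)$).} For $f\in L^{2}_{+}(E,\meas)$ and $n\in\mathbb{N}$ let
\[
g_{n}:=(f\ind{E\setminus A})\wedge(nu)\in L_{+}(E,\meas).
\]
Then $g_{n}\leq nu$ $\meas$-a.e., and by Proposition \ref{prop:T-extension-pos}-(1) and the fact that $u$ is $\mathcal{E}$-excessive,
\[
T_{t}g_{n}\leq nT_{t}u\leq nu\qquad\meas\text{-a.e.},
\]
so $\ind{A}T_{t}g_{n}=0$ $\meas$-a.e. Since $u>0$ $\meas$-a.e.\ on $E\setminus A$ and $u=0$ on $A$, we have $g_{n}\uparrow f\ind{E\setminus A}$ $\meas$-a.e. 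By the monotone convergence property in Proposition \ref{prop:T-extension-pos}-(1), $T_{t}g_{n}\uparrow T_{t}(f\ind{E\setminus A})$ $\meas$-a.e., and passing to the limit yields $\ind{A}T_{t}(f\ind{E\setminus A})=0$ $\meas$-a.e.

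\textbf{Step 3 (remove positivity).} For general $f\in L^{2}(E,\meas)$ write $f=f^{+}-f^{-}$ with $f^{\pm}\in L^{2}_{+}(E,\meas)$ and apply Step 2 to each piece; linearity of $T_{t}$ on $L^{2}(E,\meas)$ then gives $\ind{A}T_{t}(f\ind{E\setminus A})=0$ $\meas$-a.e.

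The only real subtlety is the domination step that lets $T_{t}u\leq u$ propagate to arbitrary $f\in L^{2}_{+}(E,\meas)$; truncating $f\ind{E\setminus A}$ at $nu$ is the natural device because $u>0$ precisely on $E\setminus A$, so the truncations both exhaust $f\ind{E\setminus A}$ monotonically and inherit the excessive bound of $u$. Everything else is linearity, monotonicity, and the extension in Proposition \ref{prop:T-extension-pos}.
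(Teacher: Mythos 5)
Your proof is correct and follows essentially the same route as the paper: the paper sets $f_{n}:=|f|\wedge(nu)$, observes $0\leq\ind{B}T_{t}f_{n}\leq n\ind{B}T_{t}u\leq n\ind{B}u=0$, and lets $n\to\infty$, which is exactly your truncation-by-$nu$ device (the paper merely handles signs via $|\ind{B}T_{t}(f\ind{E\setminus B})|\leq\ind{B}T_{t}(|f|\ind{E\setminus B})$ instead of splitting $f=f^{+}-f^{-}$). No gaps.
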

\begin{proof}
In fact, the following proof is valid as long as $(\mathcal{E},\mathcal{F})$
is a symmetric positivity preserving form.
Let $B:=\{u=0\}$, $f\in L^{2}(E,\meas)$ and set
$f_{n}:=|f|\wedge(nu)$ for $n\in\mathbb{N}$, so that
$f_{n}\uparrow|f|\ind{E\setminus B}$ $\meas$-a.e. Then
$0\leq\ind{B}T_{t}f_{n}\leq\ind{B}T_{t}(nu)\leq n\ind{B}u=0$
$\meas$-a.e., and letting $n\to\infty$ leads to
$|\ind{B}T_{t}(f\ind{E\setminus B})|\leq\ind{B}T_{t}(|f|\ind{E\setminus B})=0$
$\meas$-a.e. Thus $B=\{u=0\}$ is $\mathcal{E}$-invariant.
\end{proof}
\begin{prop}\label{prop:prf-step1}
Suppose that $(\mathcal{E},\mathcal{F})$ is irreducible. If $u\in\mathcal{F}_{e}$
and $\mathcal{E}(u,u)=0$ then $u\in\mathbb{R}\ind{}$.
\end{prop}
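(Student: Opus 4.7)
The plan is to use $\mathcal{E}(u,u)=0$ together with the Markovian structure, propagated from $\mathcal{F}$ to $\mathcal{F}_{e}$ by Proposition \ref{prop:Fe-Fatou}, to place indicator functions of upper level sets of $u$ in $\mathcal{F}_{e}$ with vanishing $\mathcal{E}$-norm. Proposition \ref{prop:Fe-Tt}-(3) will then render such indicators $\mathcal{E}$-excessive, Lemma \ref{lem:excessive-ppforms} will turn the corresponding level sets into $\mathcal{E}$-invariant sets, and irreducibility will collapse $u$ to a constant.

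First I would reduce to a nonnegative function. Take an approximating sequence $\{u_{n}\}_{n\in\mathbb{N}}\subset\mathcal{F}$ for $u$. Since $x\mapsto x^{\pm}$ are normal contractions, $u_{n}^{\pm}\in\mathcal{F}$ with $\mathcal{E}(u_{n}^{\pm},u_{n}^{\pm})\leq\mathcal{E}(u_{n},u_{n})\to 0$ and $u_{n}^{\pm}\to u^{\pm}$ $\meas$-a.e., so Proposition \ref{prop:Fe-Fatou} gives $u^{\pm}\in\mathcal{F}_{e}$ with $\mathcal{E}(u^{\pm},u^{\pm})=0$. It thus suffices to prove that every $v\in\mathcal{F}_{e}\cap L_{+}(E,\meas)$ with $\mathcal{E}(v,v)=0$ is constant $\meas$-a.e.; granted this, the identity $u^{+}u^{-}=0$ $\meas$-a.e.\ forces at most one of the two limiting constants to be nonzero, whence $u=u^{+}-u^{-}\in\mathbb{R}\ind{}$.

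Second, given such a $v$ and any $c>0$, I would apply the same Fatou argument to the normal contractions $\tilde{\psi}_{n}(x):=(x-c)^{+}\wedge n^{-1}$ (which satisfy $\tilde{\psi}_{n}(0)=0$ because $c>0$) in order to obtain $\tilde{\psi}_{n}(v)\in\mathcal{F}_{e}$ with $\mathcal{E}(\tilde{\psi}_{n}(v),\tilde{\psi}_{n}(v))=0$; linearity of $\mathcal{F}_{e}$ then puts $h_{n}:=n\,\tilde{\psi}_{n}(v)=(n(v-c))^{+}\wedge 1$ in $\mathcal{F}_{e}$ with vanishing $\mathcal{E}$-norm, and since $h_{n}\to\ind{\{v>c\}}$ $\meas$-a.e., a further application of Proposition \ref{prop:Fe-Fatou} yields $\ind{\{v>c\}}\in\mathcal{F}_{e}$ with $\mathcal{E}(\ind{\{v>c\}},\ind{\{v>c\}})=0$. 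Proposition \ref{prop:Fe-Tt}-(3) then gives $T_{t}\ind{\{v>c\}}=\ind{\{v>c\}}$ for every $t>0$, so $\ind{\{v>c\}}$ is $\mathcal{E}$-excessive, and Lemma \ref{lem:excessive-ppforms} makes $\{v\leq c\}=\{\ind{\{v>c\}}=0\}$ an $\mathcal{E}$-invariant set for every $c>0$.

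Because the complement of an $\mathcal{E}$-invariant set is $\mathcal{E}$-invariant, irreducibility forces, for each $c>0$, either $\meas(\{v\leq c\})=0$ or $\meas(\{v>c\})=0$. Setting $\alpha:=\mathrm{ess\,sup}_{\meas}v\in[0,\infty]$ and applying the dichotomy at $c<\alpha$ (to force $v>c$ $\meas$-a.e.) and at $c>\alpha$ (to force $v\leq c$ $\meas$-a.e.), letting $c\to\alpha$ from each side produces $v=\alpha$ $\meas$-a.e.; finiteness of $v$ ($v\in L^{0}(E,\meas)$) rules out $\alpha=\infty$, completing the reduction. The principal obstacle lies in the third step above: the Markovian contraction property must be extended from $\mathcal{F}$ to $\mathcal{F}_{e}$ and then pushed across a pointwise limit of Lipschitz cut-offs converging to a discontinuous indicator. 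Both extensions hinge on the Fatou-type bound of Proposition \ref{prop:Fe-Fatou}, without which neither $\tilde{\psi}_{n}(v)$ nor $\ind{\{v>c\}}$ is obviously accessible in $\mathcal{F}_{e}$.
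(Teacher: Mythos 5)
Your argument is correct, and its skeleton coincides with the paper's: manufacture, for each level $c$, an element of $\mathcal{F}_{e}$ with zero $\mathcal{E}$-energy whose zero set is $\{u\leq c\}$, upgrade it to a $T_{t}$-fixed (hence $\mathcal{E}$-excessive) function via Proposition \ref{prop:Fe-Tt}-(3), make $\{u\leq c\}$ $\mathcal{E}$-invariant via Lemma \ref{lem:excessive-ppforms}, and let irreducibility plus a supremum argument collapse $u$ to a constant. The implementation of the middle step differs: the paper works directly with $u_{\lambda}:=u-u\wedge\lambda=(u-\lambda)^{+}$, which lies in $\mathcal{F}_{e}$ with $\mathcal{E}(u_{\lambda},u_{\lambda})=0$ because the constant $\lambda\ind{}$ is $\mathcal{E}$-excessive (Proposition \ref{prop:excessive-char}), and applies Lemma \ref{lem:excessive-ppforms} to $u_{\lambda}$ itself; you push further and build the indicator $\ind{\{v>c\}}$ as a Fatou limit of the cut-offs $(n(v-c))^{+}\wedge 1$. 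Your route works but carries two small technical debts the paper avoids: (i) to know that $\tilde{\psi}_{n}$ operates on $\mathcal{F}$ with energy decrease you need the full normal contraction property of Dirichlet forms (e.g.\ \cite[Theorem 1.4.1]{FOT}), whereas Definition \ref{dfn:pform} records only the unit contraction; and (ii) Proposition \ref{prop:Fe-Fatou} is stated for sequences in $\mathcal{F}$, so applying it to $h_{n}\in\mathcal{F}_{e}$ requires an extra approximation (replace each $h_{n}$ by some $w_{n}\in\mathcal{F}$ with $\|w_{n}-h_{n}\|_{\mathcal{F}_{e}}\leq n^{-1}$ and pass to an $\meas$-a.e.\ convergent subsequence, as in the proof of Lemma \ref{lem:Fe-norm}). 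Both points are routine to fill, and in fact the detour through indicators is unnecessary: Lemma \ref{lem:excessive-ppforms} already applies to the nonnegative excessive function $(v-c)^{+}$, whose zero set is exactly $\{v\leq c\}$. Your preliminary reduction to $v\geq 0$ via $u^{\pm}$ and the essential-supremum endgame are both sound and essentially mirror the paper's choice of $\kappa$.
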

\begin{proof}
We follow \cite[Proof of Theorem 2.1.11,\,(i)\,$\Rightarrow$\,(ii)]{CF}.
Let $u\in\mathcal{F}_{e}$ satisfy $\mathcal{E}(u,u)=0$.
We may assume that $\meas(\{u>0\})>0$. Let $\lambda\in[0,\infty)$ and
$u_{\lambda}:=u-u\wedge\lambda$. Since $(\mathcal{E},\mathcal{F})$ is assumed
to be a Dirichlet form, $u_{\lambda}\in\mathcal{F}_{e}\cap L_{+}(E,\meas)$ and
$\mathcal{E}(u_{\lambda},u_{\lambda})=0$ (see Proposition \ref{prop:excessive-char}),
and therefore $T_{t}u_{\lambda}=u_{\lambda}$ for any $t\in(0,\infty)$
by Proposition \ref{prop:Fe-Tt}-(3). Then $\{u_{\lambda}=0\}$
is $\mathcal{E}$-invariant by Lemma \ref{lem:excessive-ppforms},
and the irreducibility of $(\mathcal{E},\mathcal{F})$ implies that
either $\meas(\{u_{\lambda}=0\})=0$ or $\meas(\{u_{\lambda}>0\})=0$ holds.
Now setting $\kappa:=\sup\{\lambda\in[0,\infty)\mid \meas(\{u_{\lambda}=0\})=0\}$,
we easily see that $\kappa\in(0,\infty)$ and that $u=\kappa$ $\meas$-a.e.
\end{proof}
For the rest of the proof of Theorem \ref{thm:Liouville}, let us recall basic notions
concerning recurrence and transience of Dirichlet forms.
See \cite[Sections 1.5 and 1.6]{FOT} or \cite[Section 2.1]{CF} for details.
For $t\in(0,\infty)$, we define $S_{t}:L^{2}(E,\meas)\to L^{2}(E,\meas)$ by
$S_{t}f:=\int_{0}^{t}T_{s}f\,ds$, where the integral is the Riemann integral in
$L^{2}(E,\meas)$. Then $t^{-1}S_{t}$ is a Markovian symmetric bounded linear
operator on $L^{2}(E,\meas)$, and therefore it is canonically extended to
an operator on $L_{+}(E,\meas)$ by Proposition \ref{prop:T-extension-pos}.
Furthermore, for any $s,t\in(0,\infty)$ we easily see that
$S_{s+t}=S_{s}+T_{s}S_{t}=S_{s}+S_{t}T_{s}$ as operators
on $L_{+}(E,\meas)$ or on $L^{2}(E,\meas)$.

Let $f\in L_{+}(E,\meas)$. Then $0\leq S_{s}f\leq S_{t}f$ $\meas$-a.e.\ and
$0\leq G_{\beta}f\leq G_{\alpha}f$ $\meas$-a.e.\ for $0<s<t$, $0<\alpha<\beta$.
Therefore there exists a unique $Gf\in L_{+}(E,\meas)$ satisfying $S_{N}f\uparrow Gf$
$\meas$-a.e. It is immediate that $Gf_{n}\uparrow Gf$ $\meas$-a.e.\ for
any $\{f_{n}\}_{n\in\mathbb{N}}\subset L_{+}(E,\meas)$
with $f_{n}\uparrow f$ $\meas$-a.e.
Since, on $L^{2}(E,\meas)$, $\{G_{\alpha}\}_{\alpha\in(0,\infty)}$ is the
Laplace transform of $\{T_{t}\}_{t\in(0,\infty)}$, we see that
$S_{t_{n}}f\uparrow Gf$ $\meas$-a.e.\ and $G_{\alpha_{n}}f\uparrow Gf$
$\meas$-a.e.\ for any
$\{t_{n}\}_{n\in\mathbb{N}},\{\alpha_{n}\}_{n\in\mathbb{N}}\subset(0,\infty)$
with $t_{n}\uparrow\infty$, $\alpha_{n}\downarrow 0$.
Moreover, since
$S_{t+N}f=S_{t}f+T_{t}S_{N}f\geq T_{t}S_{N}f$ $\meas$-a.e.\ for
$t\in(0,\infty)$ and $N\in\mathbb{N}$, by letting $N\to\infty$ we have
$T_{t}Gf\leq Gf$ $\meas$-a.e., that is, $Gf$ is $\mathcal{E}$-excessive.
We call this operator $G:L_{+}(E,\meas)\to L_{+}(E,\meas)$ the
\emph{$0$-resolvent associated with $(\mathcal{E},\mathcal{F})$}.
\begin{dfn}[Transience and Recurrence]\label{dfn:rec-tra}
\textup{(1)} $(\mathcal{E},\mathcal{F})$ is called \emph{transient} if and only if
$Gf<\infty$ $\meas$-a.e.\ for some $f\in L_{+}(E,\meas)$
with $f>0$ $\meas$-a.e.

\noindent
\textup{(2)} $(\mathcal{E},\mathcal{F})$ is called \emph{recurrent} if and only if
$\meas(\{0<Gf<\infty\})=0$ for any $f\in L_{+}(E,\meas)$.
\end{dfn}
By \cite[Lemma 1.5.1]{FOT}, $(\mathcal{E},\mathcal{F})$ is transient if and only if
$Gf<\infty$ $\meas$-a.e.\ for any $f\in L^{1}_{+}(E,\meas)$. On the other hand,
by \cite[Theorem 1.6.3]{FOT}, $(\mathcal{E},\mathcal{F})$ is recurrent
if and only if $\ind{}\in\mathcal{F}_{e}$ and $\mathcal{E}(\ind{},\ind{})=0$.

The following proposition is the second step of the proof of Theorem \ref{thm:Liouville}.
\begin{prop}\label{prop:prf-step2}
Assume that $(\mathcal{E},\mathcal{F})$ is recurrent. If
$u\in L^{0}_{+}(E,\meas)$ is $\mathcal{E}$-excessive then
$u\in\mathcal{F}_{e}$ and $\mathcal{E}(u,u)=0$.
\end{prop}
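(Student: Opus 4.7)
The plan is to reduce the general $\mathcal{E}$-excessive $u$ to the bounded case by truncating at level $n\ind{}$ and then passing to the limit inside the complete metric space $(\mathcal{F}_{e},d_{\mathcal{F}_{e}})$ of Lemma \ref{lem:Fe-norm}. The key input from recurrence is that $\ind{}\in\mathcal{F}_{e}$ with $\mathcal{E}(\ind{},\ind{})=0$, so Proposition \ref{prop:Fe-Tt}-(3) yields $T_{t}\ind{}=\ind{}$, and by linearity of $T_{t}$ on $L_{+}(E,\meas)$ (Proposition \ref{prop:T-extension-pos}-(1)) I obtain $T_{t}(n\ind{})=n\ind{}$ for every $n\in\mathbb{N}$ and $t\in(0,\infty)$.

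Setting $u_{n}:=u\wedge(n\ind{})\in L_{+}(E,\meas)$, the monotonicity of $T_{t}$ on $L_{+}(E,\meas)$ together with the $\mathcal{E}$-excessiveness of $u$ gives $T_{t}u_{n}\leq T_{t}u\leq u$ $\meas$-a.e., while $u_{n}\leq n\ind{}$ yields $T_{t}u_{n}\leq T_{t}(n\ind{})=n\ind{}$ $\meas$-a.e.; taking the minimum shows $T_{t}u_{n}\leq u_{n}$ $\meas$-a.e., so each $u_{n}$ is $\mathcal{E}$-excessive. Since $u_{n}\leq n\ind{}\in\mathcal{F}_{e}$ and $\mathcal{E}(n\ind{},n\ind{})=0$, Corollary \ref{cor:excessive} then gives $u_{n}\in\mathcal{F}_{e}$ and $\mathcal{E}(u_{n},u_{n})=0$, and Cauchy--Schwarz yields $\mathcal{E}(u_{n}-u_{m},u_{n}-u_{m})=0$ for all $n,m\in\mathbb{N}$.

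Because $u\in L^{0}_{+}(E,\meas)$ is real-valued $\meas$-a.e., $u_{n}\uparrow u$ $\meas$-a.e., and dominated convergence (with integrable dominator $\eta$) shows $\int_{E}(|u_{n}-u_{m}|\wedge 1)\eta\,d\meas\to 0$ as $n,m\to\infty$. Together with the vanishing energy seminorm, this makes $\{u_{n}\}_{n\in\mathbb{N}}$ a Cauchy sequence in $(\mathcal{F}_{e},d_{\mathcal{F}_{e}})$, which by Lemma \ref{lem:Fe-norm}-(2) converges in $d_{\mathcal{F}_{e}}$ to some $u^{*}\in\mathcal{F}_{e}$. Since convergence in $d_{\mathcal{F}_{e}}$ forces a subsequence to converge $\meas$-a.e., I deduce $u^{*}=u$ $\meas$-a.e., whence $u\in\mathcal{F}_{e}$ and $\mathcal{E}(u,u)^{1/2}=\lim_{n\to\infty}\mathcal{E}(u_{n},u_{n})^{1/2}=0$.

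The conceptual core is the identity $T_{t}\ind{}=\ind{}$ extracted from recurrence: once this is in hand, truncation respects $\mathcal{E}$-excessiveness and brings us inside the region where Corollary \ref{cor:excessive} applies. I expect no serious obstacle beyond this; the closing limit passage is a routine completeness argument in $(\mathcal{F}_{e},d_{\mathcal{F}_{e}})$.
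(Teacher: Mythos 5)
Your proof is correct and follows essentially the same route as the paper: truncate at $n\ind{}$, use recurrence to place $n\ind{}$ in $\mathcal{F}_{e}$ with zero energy, apply Corollary \ref{cor:excessive} to get $u\wedge n\in\mathcal{F}_{e}$ with $\mathcal{E}(u\wedge n,u\wedge n)=0$, and conclude by completeness of $(\mathcal{F}_{e},d_{\mathcal{F}_{e}})$ from Lemma \ref{lem:Fe-norm}-(2). The only difference is that you spell out explicitly the (correct) verification that $u\wedge n$ is $\mathcal{E}$-excessive and that the truncations form a Cauchy sequence, steps the paper leaves implicit.
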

\begin{proof}
Let $n\in\mathbb{N}$. Then $u\wedge n\leq n\ind{}$ $\meas$-a.e.,
$n\ind{}\in\mathcal{F}_{e}$ and $\mathcal{E}(n\ind{},n\ind{})=0$
by the recurrence of $(\mathcal{E},\mathcal{F})$, and
$u\wedge n$ is $\mathcal{E}$-excessive
since so are $u$ and $\ind{}$. Thus
$u\wedge n\in\mathcal{F}_{e}$ and $\mathcal{E}(u\wedge n,u\wedge n)=0$
by Corollary \ref{cor:excessive}. Lemma \ref{lem:Fe-norm}-(2) implies that
$\lim_{n\to\infty}\|v-u\wedge n\|_{\mathcal{F}_{e}}=0$ for some $v\in\mathcal{F}_{e}$
with $\|\cdot\|_{\mathcal{F}_{e}}$ as defined there, and then we easily have
$u=v\in\mathcal{F}_{e}$ and $\mathcal{E}(u,u)=0$.
\end{proof}
As the third step, now we finish the proof of Theorem \ref{thm:Liouville}.
\begin{proof}[Proof of Theorem \textup{\ref{thm:Liouville}}]
$\textrm{\bfseries\upshape 1)}\Rightarrow\textrm{\bfseries\upshape 2)}$
follows by Proposition \ref{prop:prf-step1}, and so does
$\textrm{\bfseries\upshape 1)}\Rightarrow\textrm{\bfseries\upshape 5)}$
by Propositions \ref{prop:prf-step1} and \ref{prop:prf-step2}.
$\textrm{\bfseries\upshape 2)}\Rightarrow\textrm{\bfseries\upshape 3)}$,
$\textrm{\bfseries\upshape 4)}\Rightarrow\textrm{\bfseries\upshape 6)}$ and
$\textrm{\bfseries\upshape 5)}\Rightarrow\textrm{\bfseries\upshape 6)}$ are trivial.

\vskip 4pt\noindent
$\textrm{\bfseries\upshape 1)}\Rightarrow\textrm{\bfseries\upshape 4)}$:
Let $u\in\mathcal{F}_{e}$ be $\mathcal{E}$-excessive in the wide sense,
$n\in\mathbb{N}$ and $u_{n}:=u\wedge n$. Then $u_{n}\in\mathcal{F}_{e}$,
$u_{n}$ is also $\mathcal{E}$-excessive in the wide sense,
$n\ind{}-u_{n}\in\mathcal{F}_{e}\cap L_{+}(E,\meas)$ and hence
$\mathcal{E}(u_{n},u_{n})=\mathcal{E}(u_{n},u_{n}-n\ind{})\leq 0$
by Proposition \ref{prop:Fe-Tt}-(2).
As in the proof of Proposition \ref{prop:prf-step2},
letting $n\to\infty$ we get $\mathcal{E}(u,u)=0$ by Lemma \ref{lem:Fe-norm}-(2),
and hence $u\in\mathbb{R}\ind{}$ by Proposition \ref{prop:prf-step1}.

\vskip 4pt\noindent
$\textrm{\bfseries\upshape 3)}\Rightarrow\textrm{\bfseries\upshape 1)}$:
$(\mathcal{E},\mathcal{F})$ is recurrent since $\ind{}\in\mathcal{F}_{e}$ and
$\mathcal{E}(\ind{},\ind{})=0$.
Let $A\in\mathcal{B}$ be $\mathcal{E}$-invariant. Then
$\ind{A}=\ind{A}\ind{}\in\mathcal{F}_{e}\cap L^{\infty}_{+}(E,\meas)$ and
$0\leq\mathcal{E}(\ind{A},\ind{A})\leq\mathcal{E}(\ind{},\ind{})=0$
by \cite[Theorem 1.6.1]{FOT}. Now $\textrm{\bfseries\upshape 3)}$
implies $\ind{A}\in\mathbb{R}\ind{}$, and hence
either $\meas(A)=0$ or $\meas(E\setminus A)=0$.

\vskip 4pt\noindent
\emph{$\textrm{\bfseries\upshape 6)}\Rightarrow\textrm{\bfseries\upshape 3)}$
when $(E,\mathcal{B},\meas)$ is non-trivial}:
Choose $g\in L^{1}(E,\meas)$ so that $g>0$ $\meas$-a.e.,
and set $E_{c}:=\{Gg=\infty\}$. Then
$\ind{E_{c}}\in\mathcal{F}_{e}\cap L^{\infty}_{+}(E,\meas)$
and $\mathcal{E}(\ind{E_{c}},\ind{E_{c}})=0$ by \cite[Corollary 1.6.2]{FOT},
and $\textrm{\bfseries\upshape 6)}$ together with
Proposition \ref{prop:Fe-Tt}-(3) implies $\ind{E_{c}}\in\mathbb{R}\ind{}$,
i.e., either $\meas(E_{c})=0$ or $\meas(E\setminus E_{c})=0$.
In view of $\textrm{\bfseries\upshape 6)}$ and Proposition \ref{prop:Fe-Tt}-(3),
it suffices to show $\meas(E\setminus E_{c})=0$.

Suppose $\meas(E_{c})=0$, so that $(\mathcal{E},\mathcal{F})$ is transient,
and set $\eta:=g/(1\vee Gg)$. Then $0<\eta\leq g$ $\meas$-a.e.\ and
$\langle\eta,G\eta\rangle\leq\langle g/(1\vee Gg),Gg\rangle\leq\|g\|_{1}<\infty$.
Let $f\in L^{1}_{+}(E,\meas)\cap L^{2}(E,\meas)$ and set $f_{n}:=f\wedge(n\eta)$
for $n\in\mathbb{N}$.
Then $f_{n}\in L^{2}_{+}(E,\meas)$, $Gf_{n}\leq nG\eta<\infty$ $\meas$-a.e.,
$\langle f_{n},Gf_{n}\rangle<\infty$ and $f_{n}\uparrow f$ $\meas$-a.e. Since
$\mathcal{E}(G_{\alpha}f_{n},G_{\alpha}f_{n})
	\leq\langle f_{n},G_{\alpha}f_{n}\rangle\leq\langle f_{n},Gf_{n}\rangle<\infty$
for $\alpha\in(0,\infty)$, Proposition \ref{prop:Fe-Fatou} implies
$Gf_{n}\in\mathcal{F}_{e}$. Since $Gf_{n}$ is $\mathcal{E}$-excessive, so is
$n\wedge Gf_{n}\in\mathcal{F}_{e}\cap L^{\infty}_{+}(E,\meas)$
and $\textrm{\bfseries\upshape 6)}$ yields $n\wedge Gf_{n}\in\mathbb{R}\ind{}$.
Letting $n\to\infty$ and noting $Gf<\infty$ $\meas$-a.e.\ by the transience of
$(\mathcal{E},\mathcal{F})$, we get $Gf\in\mathbb{R}\ind{}$.
Let $\alpha\in(0,\infty)$. Then
$G_{\alpha}f\in L^{1}_{+}(E,\meas)\cap L^{2}(E,\meas)$ and
hence $GG_{\alpha}f\in\mathbb{R}\ind{}$. Letting $n\to\infty$ in
$G_{\alpha}f=G_{1/n}f-(\alpha-1/n)G_{1/n}G_{\alpha}f$ implies that
$G_{\alpha}f=Gf-\alpha GG_{\alpha}f\in\mathbb{R}\ind{}$.
Since $\alpha G_{\alpha}f\to f$ in $L^{2}(E,\meas)$ as $\alpha\to\infty$,
we conclude that $L^{1}_{+}(E,\meas)\cap L^{2}(E,\meas)\subset\mathbb{R}\ind{}$,
contradicting the assumption that $(E,\mathcal{B},\meas)$ is non-trivial.
Thus $\meas(E\setminus E_{c})=0$ follows.
\end{proof}
\subsection*{Acknowledgements}
The author would like to express his deepest gratitude toward Professor Masatoshi
Fukushima for fruitful discussions and for having suggested this problem to him in \cite{F1}.
The author would like to thank Professor Masanori Hino for detailed valuable comments
on the proofs in an earlier version of the manuscript; in particular, the proofs of
Propositions \ref{prop:Fe-Tt} and \ref{prop:prf-step2} have been much simplified
by following his suggestion of the use of Lemma \ref{lem:Fe-norm} and Corollary \ref{cor:excessive}.
The author would like to thank also Professor Masayoshi Takeda and
Professor Jun Kigami for valuable comments.
\end{document}